\newcommand{\N}{\mathbb{N}}
\newcommand{\R}{\mathbb{R}}
\newcommand{\Div}{\mathrm{div} \, }
\renewcommand{\epsilon}{\varepsilon}
\renewcommand{\phi}{\varphi}
\newtheorem{lemma}{Lemma}[section]
\newtheorem{thm}[lemma]{Theorem}
\newtheorem{prop}[lemma]{Proposition}
\theoremstyle{definition}
\newtheorem{defi}[lemma]{Definition}
\newtheorem{rmk}[lemma]{Remark}
\numberwithin{equation}{section}
\DeclareMathOperator*{\essinf}{ess \, inf}
\DeclareMathOperator*{\supp}{supp}
\begin{document}

\title[Strongly singular convective equations in $ \R^N $]{Strongly singular convective \\ elliptic equations in $\R^N$ driven \\ by a non-homogeneous operator}
\author[L. Gambera]{Laura Gambera}
\address[L. Gambera]{Dipartimento di Matematica e Informatica, Universit\`a degli Studi di Catania, Viale A. Doria 6, 95125 Catania, Italy}
\email{laura.gambera@studium.unict.it}
\author[U. Guarnotta]{Umberto Guarnotta}
\address[U. Guarnotta]{Dipartimento di Matematica e Informatica, Universit\`a degli Studi di Palermo, Via Archirafi 34, 90123 Palermo, Italy}
\email{umberto.guarnotta@unipa.it}

\maketitle

\begin{abstract}
Existence of a generalized solution to a strongly singular convective elliptic equation in the whole space is established. The differential operator, patterned after the $(p,q)$-Laplacian, can be non-homogeneous. The result is obtained by solving some regularized problems through fixed point theory, variational methods and compactness results, besides exploiting nonlinear regularity theory and comparison principles.
\end{abstract}

\let\thefootnote\relax
\footnote{{\bf{MSC 2020}}: 35J60, 35J75, 35B08.}
\footnote{{\bf{Keywords}}: entire solutions, strongly singular problems, convective problems, non-homogeneous operators.}
\footnote{\Letter \quad Umberto Guarnotta (umberto.guarnotta@unipa.it).}

%\begin{center}
%\begin{minipage}{9.5cm}
%\tableofcontents
%\end{minipage}
%\end{center}

\section{Introduction and main result}

In this paper we deal with the problem
\begin{equation}
\label{prob}
\tag{P}
\left\{
\begin{alignedat}{2}
-\Div a(\nabla u) &= f(x,u) + g(x,\nabla u) \quad &&\mbox{in} \;\; \R^N, \\
u &> 0 \quad &&\mbox{in} \;\; \R^N,
\end{alignedat}
\right.
\end{equation}
where $ N \geq 2 $. The differential operator $ u \mapsto \Div a(\nabla u) $, usually called $ a $-Laplacian, is patterned after the $ (p,q) $-Laplacian $ \Delta_p + \Delta_q $, $ 1<q<p<+\infty $, where $ \Delta_p u := \Div(|\nabla u|^{p-2} \nabla u) $, as usual; see $ {\rm (H_a)} $ in Section 2 for details. Hereafter we assume $ 1<p<N $. We suppose that $ f: \R^N \times (0,+\infty) \to [0,+\infty) $ and $ g: \R^N \times \R^N \to [0,+\infty) $ are Carathéodory functions satisfying the following conditions:
\begin{equation}
\label{hypf}
\tag{$ {\rm H_f} $}
\begin{split}
\liminf_{s \to 0^+} f(x,s) > 0 \quad \mbox{uniformly w.r.t.} \;\; x \in B_\sigma(x_0), \\
f(x,s) \leq h(x)s^{-\gamma}, \quad h \in L^1(\R^N) \cap L^\eta(\R^N), \quad h \geq 0,
\end{split}
\end{equation}
and
\begin{equation}
\label{hypg}
\tag{$ {\rm H_g} $}
g(x,\xi) \leq k(x)|\xi|^r, \quad k \in L^1(\R^N) \cap L^\theta(\R^N), \quad k \geq 0,
\end{equation}
for some $ x_0 \in \R^N $, $ \sigma \in (0,1) $, $ \gamma \geq 1 $, $ r \in [0,p-1) $, and
\begin{equation}
\label{hyphkr}
\eta > (p^*)', \quad \theta > \left(\frac{1}{(p^*)'}-\frac{r}{p}\right)^{-1},
\end{equation}
being $ p^* := \frac{Np}{N-p} $ the Sobolev critical exponent. In the sequel we will suppose, without loss of generality, that $ x_0 = 0 $.

Due to the strongly singular nature of the reaction term, the word `solution' has to be understood in a suitable sense. Here we adopt the definition of generalized solution used in \cite{CP}; adapted to our context, it reads as follows.
\begin{defi}
\label{gensol}
$ u \in W^{1,p}_{\rm loc}(\R^N) $ is a \textit{generalized solution} to \eqref{prob} if the following conditions hold true:
\begin{itemize}
\item[$ {\rm (i)} $] For any compact $ K \subseteq \R^N $ one has
\begin{linenomath}
\begin{equation*}
\essinf_K u > 0.
\end{equation*}
\end{linenomath}
\item[$ {\rm (ii)} $] For any $ \phi \in C^\infty_c(\R^N) $ it holds
\begin{linenomath}
\begin{equation*}
\int_{\R^N} a(\nabla u) \cdot \nabla \phi = \int_{\R^N} [f(x,u) + g(x,\nabla u)] \phi.
\end{equation*}
\end{linenomath}
\end{itemize}
\end{defi} 
Our goal is to prove the following.
\begin{thm}
\label{mainthm}
Under $ {\rm (H_a)} $, \eqref{hypf}--\eqref{hypg}, and \eqref{hyphkr}, there exists $ u \in W^{1,p}_{\rm loc}(\R^N) $ generalized solution to \eqref{prob}.
\end{thm}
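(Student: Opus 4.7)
The overall strategy is a double approximation combined with fixed-point theory. First, I would truncate the domain to a sequence of balls $B_n$ and regularize the singularity by shifting the argument of $f$ by $1/n$, thereby reducing to a sequence of Dirichlet problems
\begin{equation*}
-\Div a(\nabla u) = f\!\left(x,\,u+\tfrac{1}{n}\right) + g(x,\nabla u) \quad\text{in } B_n, \qquad u=0 \text{ on } \partial B_n,
\end{equation*}
whose reaction is bounded by $n^{\gamma}h(x)\in L^{\eta}\subset L^{(p^{*})'}$. The desired entire solution will be obtained by passing to the limit $n\to\infty$ after extracting uniform-in-$n$ estimates.

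To produce a solution $u_n$ of each regularized problem, I would freeze both arguments on the right-hand side and define $T\colon W^{1,p}_{0}(B_n)\to W^{1,p}_{0}(B_n)$ by letting $T(v)$ be the unique weak solution of $-\Div a(\nabla u)=f(x,v^{+}+1/n)+g(x,\nabla v)$ with zero boundary data. Existence and uniqueness of $T(v)$ follow from the monotonicity and coercivity in $({\rm H}_a)$, since the right-hand side sits in $W^{-1,p'}(B_n)$; continuity and compactness of $T$ come from standard nonlinear regularity and Rellich's theorem. Testing with $T(v)$ and applying H\"older to the convective term with the exponents $\theta$, $p/r$, and $p^{*}$ --- whose reciprocals sum to $1$ precisely by \eqref{hyphkr} --- together with $r<p-1$, produces an absorbing ball in $W^{1,p}_{0}(B_n)$. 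Schauder's fixed-point theorem then yields $u_n$.

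The crucial task is to obtain uniform-in-$n$ estimates, since plain energy testing only gives bounds on $\|\nabla u_n\|_{p}$ with a useless $n^{\gamma}$ factor from the singular term. To circumvent this, I would construct a universal positive subsolution $\underline{u}$ supported in $B_{\sigma}$: solve $-\Div a(\nabla \underline{u})=c_{0}\chi_{B_{\sigma}}$ on a slightly larger ball with zero boundary data, and rescale it small enough that $f(x,\underline{u}+1/n)\geq c_{0}:=\liminf_{s\to 0^{+}}\essinf_{B_{\sigma}}f(\cdot,s)>0$; weak comparison then yields $u_n\geq\underline{u}$ on $B_{\sigma}$ uniformly in~$n$. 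Propagating this positivity via the strong minimum principle (available for supersolutions of the $a$-Laplacian) gives $\essinf_{K}u_n\geq c(K)>0$ for every compact $K\subset\R^{N}$, independently of $n$. Consequently $f(x,u_n+1/n)\leq[c(K)]^{-\gamma}h(x)$ on $K$, and a localized energy estimate with a cutoff produces uniform bounds on $(u_n)$ in $W^{1,p}_{\mathrm{loc}}(\R^{N})$.

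Weak precompactness yields a subsequence with $u_n\rightharpoonup u$ in $W^{1,p}_{\mathrm{loc}}$ and $u_n\to u$ a.e., and the essinf lower bound transfers to $u$, giving condition~(i) of Definition~\ref{gensol}. The main obstacle --- the technical core of the proof --- is the passage to the limit in the non-variational term $g(x,\nabla u_n)$, which demands $\nabla u_n\to\nabla u$ a.e.\ in $\R^{N}$. I would extract this via a Boccardo--Murat-type monotonicity argument: testing the equations for $u_n$ against cutoffs times a suitable truncation of $(u_n-u)$ and exploiting the uniform positive lower bound to dominate the singular remainder, one forces $\int a(\nabla u_n)\cdot(\nabla u_n-\nabla u)\,\psi\to 0$ on compacts, whence a.e.~convergence of gradients by a Leray--Lions-type argument together with the strict monotonicity of $a$. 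Dominated convergence --- enabled by the positive local lower bound, by $h,k\in L^{1}(\R^{N})$, and by strong $L^{p}_{\mathrm{loc}}$-convergence of $\nabla u_n$ via Vitali --- then closes the passage to the limit in the distributional identity, producing the generalized solution.
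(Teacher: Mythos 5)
Your overall architecture (regularize on balls $B_n$ with a shift $\epsilon_n$, derive a uniform local positive lower bound to tame the singularity, run a localized Caccioppoli estimate, and pass to the limit via Boccardo--Murat a.e.\ convergence of gradients) is the same as the paper's, and the fixed-point scheme for the regularized problems is viable. However, two steps as written would fail. First, the sub-solution: you propose to solve $-\Div a(\nabla \underline{u})=c_0\chi_{B_\sigma}$ and then ``rescale it small enough''. For a non-homogeneous operator (the whole point of $ {\rm (H_a)} $, e.g.\ the $(p,q)$-Laplacian) $t\underline{u}$ is no longer a sub-solution of anything comparable, since $-\Div a(t\nabla\underline{u})$ is not controlled by $t^{p-1}(-\Div a(\nabla\underline{u}))$. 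The paper circumvents exactly this by a variational construction with truncated datum $f_\delta=\min\{\underline{f}(\cdot,\cdot+\epsilon_1),\delta\}$ and by sending $\delta\to0^+$ in $C^1(\overline{B}_n)$ using Lieberman's uniform estimates (Lemma \ref{subsollemma}); some such device is unavoidable here. Relatedly, propagating positivity from $B_\sigma$ to an arbitrary compact ``via the strong minimum principle'' gives, for each fixed $n$, $\inf_K u_n>0$, but not a bound \emph{uniform in $n$}: the constant produced by the maximum principle applied to $u_n$ may degenerate as $n\to\infty$. You need either a quantitative weak Harnack inequality with $n$-independent constants, or (as in \eqref{comparison}) comparison of every $u_n$, $n>j$, with one fixed continuous sub-solution $\underline{u}_{j+1}$ that is strictly positive on all of $\overline{B}_j$.

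Second, and more seriously, you never construct an upper barrier, yet your localized energy estimate needs one. Testing with $u_n\psi^p$ (or $(u_n-\omega_j)_+\psi$) produces the terms $\int u_n\,a(\nabla u_n)\cdot\nabla\psi$ and $\int k|\nabla u_n|^r u_n\psi$, whose control requires an a priori local $L^p$ (indeed $L^{p^*}$) bound on $u_n$ itself --- precisely the quantity you are trying to estimate. A global energy bound is unavailable because $h(u_n+\epsilon_n)^{-\gamma}u_n$ is of order $\epsilon_n^{1-\gamma}$ near $\partial B_n$ when $\gamma>1$ (the ``infinite energy'' obstruction for strongly singular problems). The paper resolves this by devoting Subsection 3.1 to an $n$-independent super-solution $\overline{u}=\tilde u+1$ of all the \eqref{ballprob} (built by Schauder's fixed point for the frozen problem \eqref{frozenprob} plus the compactness Lemmas \ref{gradconvlocal}--\ref{gradconvglobal}), so that $u_n\le\overline{u}$ and $\|u_n\|_{L^{p^*}(B_{j+1})}$ is bounded by data; this feeds directly into \eqref{potential} and the hole-filling argument leading to \eqref{mainenergyest}. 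Without this (or an equivalent substitute, e.g.\ a Boccardo--Gallou\"et-type bootstrap), your uniform $W^{1,p}_{\rm loc}$ bound does not close.
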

Now we briefly describe the technique adopted to prove Theorem \ref{mainthm}. We consider a sequence of regularized problems \eqref{ballprob}, obtained by shifting the singular terms through a parameter $ \epsilon_n $  and working in a ball $ B_n $, with $ \epsilon_n \searrow 0 $ and $ B_n \nearrow \R^N $ as $ n \to \infty $. We want to find, for all $ n \in \N $, a solution $ u_n $ to \eqref{ballprob}; this can be done via a sub-super-solution theorem (see Theorem \ref{subsuperthm}). \\
We first construct a function $ \overline{u} \in W^{1,p}_{\rm loc}(\R^N) $ (vide \eqref{supersoldef}), which is a super-solution of each \eqref{ballprob}, independently of $ n $. The construction of $ \overline{u} $ is done via fixed point theory (Lemma \ref{schauder}) and approximation (Lemma \ref{supersollemma}), besides exploiting the compactness results presented in Lemmas \ref{gradconvlocal} and \ref{gradconvglobal}. The fact that $ \overline{u} $ is independent of $ n $ will be crucial to get the energy estimate \eqref{mainenergyest} for the sequence $ \{u_n\} $. \\
Then, for all $ n \in \N $, a regular sub-solution $ \underline{u}_n $ to \eqref{ballprob} is provided (see Lemma \ref{subsollemma}). Regularity is essential in order to control $ \underline{u}_n $ from below: this fact will allow to `avoid' the singularity of $ f $, at least on compact sets. The construction of $ \underline{u}_n $ is performed via variational methods, truncation techniques, and standard compactness results. A careful choice of the sub-solution $ \underline{u}_n $, based on Lemma \ref{subflemma} and \eqref{truncf}, allows to get \eqref{comparison}, which is a local uniform estimate from below for $ \{u_n\} $. \\
After constructing $ u_n $ with the aim of Theorem \ref{subsuperthm}, it remains to let $ n \to \infty $ and get a solution $ u $ to \eqref{prob}. This passage to the limit strongly relies on estimate \eqref{mainenergyest}, which can be obtained by a fine analysis of the energy of $ u_n $ on certain sets, which are the intersection of balls with super-level sets of $ u_n $: indeed, we can control the convection terms in the balls, while singular terms can be estimated in the super-level sets. The estimate is obtained by a localization procedure, taking into account the properties of $ \underline{u}_n $ and $ \overline{u} $. A compactness argument allows to prove that $ u $ is a generalized solution of \eqref{prob}.

Problem \eqref{prob} possesses at least four interesting peculiarities:
\begin{itemize}
\item the operator $ u \mapsto a(\nabla u) $, patterned after the $ (p,q) $-Laplacian, can be non-homogeneous;
\item the reaction term $ f $ is strongly singular (that is, $ \gamma \geq 1 $), and $ f(x,\cdot) $ can be non-monotone;
\item the reaction term $ g $ is convective (i.e., it depends on $ \nabla u $);
\item the problem is set in the whole space $ \R^N $.
\end{itemize}
Problems exhibiting some of these features arise from applications, in particular in chemistry and biology, and have been recently investigated by many authors from different points of view, as existence, regularity, and qualitative properties of solutions. Due to the large amount of publications in these areas, here we limit ourselves to synthetically recall some recent results concerning each topic.

Existence results for singular equations driven by a non-homogeneous operator have been obtained, e.g., in \cite{PW}, for Dirichlet $ (p,q) $-Laplacian problems, and \cite{GMM}, for convective $ a $-Laplacian Robin problems. Regularity of non-standard operators traces back to \cite{M,L}. Regarding existence of solutions, the lack of homogeneity of the operator prevents to use specific methods of constructing sub-solutions, as the ones used for instance in \cite{CP,CGP}, in favor of an approach similar to \cite{GMM}. On the other hand, regularity issues do not allow to work directly in the whole space, compelling to use sequence of balls tending to $ \R^N $.

The most popular singular problems are the so-called `weakly singular problems', characterized by exhibiting $ \gamma \in (0,1) $ instead of $ \gamma \geq 1 $; cf.\ \eqref{hypf}. Weakly singular problems are studied often by means of variational techniques, but this method is not directly applicable for strongly singular problems, since the solutions usually possess infinite energy, as pointed out, e.g., in \cite[p.2]{CST}. Hence, gaining compactness from energy estimates requires some efforts, as localization procedures. For a good introduction about singular problems, we refer to the monograph \cite{GR}. Restricting to strongly singular problems in the whole space, for $ p $-Laplace equations existence of radial solutions with a prescribed decay at infinity has been proved in \cite{C}. The article \cite{CP}, which represents the main motivation of our work, furnishes a solution to a $ p $-Laplace equation without exploiting any radiality or decay conditions. It is worth noticing that, in contrast to \cite{C,CP}, the function $ u \mapsto f(x,u)/s^{p-1} $ can be non-monotone and the function $ u \mapsto f(x,u) $ is not a perturbation of $ u \mapsto u^{-\gamma} $; moreover, existence of a super-solution is not postulated. The variational case, i.e. $ \gamma \in (0,1) $, is briefly sketched in Remark \ref{variational}.

The presence of convection terms destroys the variational structure of problems even simpler than \eqref{prob}. For this reason, topological methods, as the sub-super-solution technique, are widely employed in these situations; examples of application of the sub-super-solution method within suitable trapping regions are provided by \cite{LMZ,CaLiMo}, regarding singular convective equations and systems, respectively. Several papers by Ghergu and Radulescu are devoted to analyze the effects of the presence of convection terms; here we mention \cite{GR2}, which is one of the first articles treating convection equations in $ \R^N $. To the best of our knowledge, the present paper represents the first contribution about quasi-linear convective problems in the whole space.

Finally, we recall that working in $ \R^N $ causes lack of compactness of Sobolev embeddings. Nevertheless, a sub-super-solution approach is available in several contexts; see, e.g., \cite{RS}. Existence of solutions in $ \R^N $ is treated in \cite{FMT} for non-singular equations driven by the $ (p,q) $-Laplacian, while in \cite{MMM} an existence result for singular $ p $-Laplacian systems in the whole space is obtained. The very recent work \cite{GMMou} treats singular convective $ p $-Laplacian systems in the whole $ \R^N $.

An appendix concerning the $ a $-Laplacian operator concludes the article. Although this result is probably folklore, it is proved that the set of hypotheses usually employed to ensure both Lieberman's regularity theorems and Pucci-Serrin's strong maximum principle is actually equivalent to two assumptions: ellipticity and growth condition. The former hypotheses are employed in existence theory (here we mention \cite{GMP} just to give an example), while the latter conditions are widely used in regularity theory (see, for instance, \cite[Part II]{CM}).

\section{Preliminaries}

Let $ \R^N $, $ N \geq 2 $, be the $ N $-th dimensional Euclidean space endowed with the standard scalar product $ v \cdot w $ and norm $ |v| $, for any $ v,w \in \R^N $. With $ B_r(x) $ we indicate the ball of radius $ r $ centered at $ x $. Given two sets $ E,F \subseteq \R^N $, the symbol $ E \Subset F $ means $ \overline{E} \subseteq F $, where $ \overline{E} $ is the closure of $ E $; moreover, $ \chi_E $ denotes the characteristic function of $ E $, while $ |E| $ stands for its Lebesgue measure. If $ E \subseteq F $ and $ u:F \to \R $, then $ u_{\mid_E} $ represents the restriction of $ u $ to $ E $; anyway, for the sake of readability, the restriction symbol will be omitted when no confusion can arise. We set $ u_+ := \max\{u,0\} $ and $ \supp u := \overline{\{x \in \R^N: \, u(x) \neq 0 \}} $.

Suppose that $ (X,\|\cdot\|_X) $ is a Banach space and $ X^* $ is the topological dual of $ X $. We indicate with $ \langle \cdot,\cdot \rangle $ the duality brackets. If $ \{u_n\} \subseteq X $ and $ u \in X $, by $ u_n \to u $ we mean that $ \{u_n\} $ strongly converges to $ u $; weak convergence is indicated as $ u_n \rightharpoonup u $. Given another Banach space $ (Y,\|\cdot\|_Y) $, the symbol $ X \hookrightarrow Y $ denotes the continuous embedding of $ X $ into $ Y $.

We say that an operator $ T:X \to Y $ between two Banach spaces $ (X,\|\cdot\|_X) $ and $ (Y,\|\cdot\|_Y) $ is
\begin{itemize}
\item \textit{compact} if, for any bounded set $ B \subseteq X $, the set $ \overline{T(B)} $ is compact in $ Y $;
\item \textit{continuous} if $ u_n \to u $ implies $ T(u_n) \to T(u) $;
\item \textit{weakly continuous} if $ u_n \rightharpoonup u $ implies $ T(u_n) \rightharpoonup T(u) $;
\item \textit{strongly continuous} if $ u_n \rightharpoonup u $ implies $ T(u_n) \to T(u) $;
\item \textit{demi-continuous} if $ u_n \to u $ implies $ T(u_n) \rightharpoonup T(u) $;
\item \textit{completely continuous} if it is continuous and compact.
\end{itemize} 

Let $ \Omega \subseteq \R^N $ be a domain and $ X(\Omega) $ be a function space on $ \Omega $. The symbol $ X_{\rm loc}(\Omega) $ denotes the set of functions $ u: \Omega \to \R $ such that, for any compact $ K \Subset \Omega $, one has $ u_{\mid_K} \in X(K) $. A sequence $ \{u_n\} \subseteq X_{\rm loc}(\Omega) $ is bounded in $ X_{\rm loc}(\Omega) $ if $ \{u_{n \mid_K}\} $ is bounded in $ X(K) $ for any compact $ K \Subset \Omega $.

Given any domain $ \Omega \subseteq \R^N $, the set $ C^\infty_c(\Omega) $ consists of the test functions which are compactly supported in $ \Omega $. The spaces $ L^p(\Omega) $ and $ W^{1,p}(\Omega) $ are, respectively, the Lebesgue and Sobolev spaces, equipped with their standard norms. The space $ W^{1,p}_0(\Omega) $ contains exactly the functions of $ u \in W^{1,p}(\Omega) $ such that $ u = 0 $ on $ \partial \Omega $ in the sense of traces; if $ \Omega $ is bounded, we endow $ W^{1,p}_0(\Omega) $ with the standard equivalent norm given by the Poincaré inequality, that is,
\begin{equation}
\label{sobolevnorm}
\|u\|_{W^{1,p}_0(\Omega)} := \|\nabla u\|_{L^p(\Omega)}.
\end{equation}
For a generic domain $ \Omega $, the Beppo Levi spaces (or homogeneous Sobolev spaces) are defined as
\begin{linenomath}
\begin{equation*}
\mathcal{D}^{1,p}_0(\Omega) := \{u \in L^{p^*}(\Omega): \, |\nabla u| \in L^p(\Omega)\},
\end{equation*}
\end{linenomath}
and are equipped with the same norm of \eqref{sobolevnorm} (even if $ \Omega $ is unbounded). The space of distributions in $ \Omega $ will be indicated with $ \mathscr{D}'(\Omega) $.

The letter $ c $ denotes a generic positive constant depending on the data of the problem. If $ c $ depends only on a parameter, say $ p $, we write $ c_p $ instead of $ c $.

Now we list the assumptions on the differential operator $ u \mapsto a(\nabla u) $ and discuss its main properties. \\
We suppose that $ a $ has Uhlenbeck structure, i.e., $ a(\xi) = a_0(|\xi|)\xi $ for a suitable $ C^1 $ function $ a_0: (0,+\infty) \to (0,+\infty) $. Moreover, we assume the following structure hypotheses on $ a_0 $:
\begin{itemize}
\item[$ {\rm (H_a)}_1 $] \makebox[\textwidth][c]{$ mt^{p-1} \leq ta_0(t) \leq M(t^{p-1} + 1) \quad \forall t \in (0,+\infty), $}
\item[$ {\rm (H_a)}_2 $] \makebox[\textwidth][c]{$ \displaystyle{-1 < i_a := \inf_{t > 0} \frac{ta_0'(t)}{a_0(t)} \leq \sup_{t > 0} \frac{ta_0'(t)}{a_0(t)} =: s_a < +\infty,} $}
\end{itemize}
being $ 1 < p < N $ and $ M,m > 0 $ suitable constants. We denote by $ {\rm (H_a)} $ the set of assumptions $ {\rm (H_a)}_1 $--$ {\rm (H_a)}_2 $. Observe that $ {\rm (H_a)} $ implies
\begin{itemize}
\item[$ {\rm (a_1)} $] \makebox[\textwidth][c]{$ |a(\xi)| \leq M(|\xi|^{p-1} + 1) \quad \forall \xi \in \R^N, $}
\item[$ {\rm (a_2)} $] \makebox[\textwidth][c]{$ a(\xi) \cdot \xi \geq m|\xi|^p \quad \forall \xi \in \R^N, $}
\item[$ {\rm (a_3)} $] \makebox[\textwidth][c]{$ (a(\xi_1)-a(\xi_2)) \cdot (\xi_1-\xi_2) > 0 \quad \forall \xi_1,\xi_2 \in \R^N, \quad \xi_1 \neq \xi_2. $}
\end{itemize}
Incidentally, notice that $ {\rm (H_a)_1} $ is a growth condition which allows to work in the functional setting of Sobolev spaces; on the other hand, $ {\rm (H_a)}_2 $ is equivalent to the ellipticity of the differential operator.
\begin{rmk}
\label{standardrmk}
We explicitly notice that, for operators with Uhlenbeck structure, hypothesis $ {\rm (H_a)} $ is equivalent to Assumption 2.5 of \cite{GMM}; see the Appendix for details. In particular, the operator $ u \mapsto a(\nabla u) $ satisfies the assuptions of both Lieberman's nonlinear regularity theory \cite[p.320]{L} and Pucci-Serrin's strong maximum principle \cite[pp.3-5]{PS}, so that any solution to
\begin{linenomath}
\begin{equation*}
\left\{
\begin{alignedat}{2}
-\Div a(\nabla u) &= f(x) \quad &&\mbox{in} \;\; \Omega, \\
u &= 0 \quad &&\mbox{on} \;\; \partial \Omega,
\end{alignedat}
\right.
\end{equation*}
\end{linenomath}
being $ f \in L^\infty(\Omega) $ with $ f \geq  0 $ a.e.\ in $ \Omega $, fulfills $ u \in C^{1,\tau}(\overline{\Omega}) $ and $ u > 0 $ in $ \Omega $. More precisely, there exist $ \tau \in (0,1] $ and $ C = C(\|f\|_\infty) > 0 $ such that
\begin{linenomath}
\begin{equation*}
\|u\|_{C^{1,\tau}(\overline{\Omega})} \leq C.
\end{equation*}
\end{linenomath}
Nevertheless, the operator $ \Phi: W^{1,p}_0(\Omega) \to W^{-1,p'}(\Omega) $ defined as
\begin{linenomath}
\begin{equation*}
\langle \Phi(u),v \rangle = \int_\Omega a(\nabla u) \cdot \nabla v
\end{equation*}
\end{linenomath}
satisfies the Leray-Lions conditions (see, e.g., $ {\rm (H_1)} $--$ {\rm (H_3)} $ of \cite[p.43]{CLM}).
\end{rmk}
%
%For the reader's convenience, we recall the following theorem of passage to the limit under the integral sign.
%%
%\begin{prop}[Pratt's lemma]
%\label{pratt}
%Let $ (X,\mu,\mathscr{F}) $ be a measure space. Suppose $ \{f_n\} $, $ \{g_n\} $, $ \{h_n\} $ to be sequences of measurable functions such that
%%
%\begin{equation*}
%\begin{alignedat}{2}
%&f_n \to f \quad \mu\mbox{-a.e.\ in} \;\; X, \quad &&\lim_{n \to \infty} \int_X f_n \, {\rm d}\mu = \int_X f \, {\rm d}\mu \in \R, \\
%&h_n \to h \quad \mu\mbox{-a.e.\ in} \;\; X, \quad \quad &&\lim_{n \to \infty} \int_X h_n \, {\rm d}\mu = \int_X h \, {\rm d}\mu \in \R, \\
%&g_n \to g \quad \mu\mbox{-a.e.\ in} \;\; X, \quad &&f_n \leq g_n \leq h_n \quad \mu\mbox{-a.e.\ in} \;\; X.
%\end{alignedat}
%\end{equation*}
%%
%Then
%%
%\begin{equation*}
%\lim_{n \to \infty} \int_X g_n \, {\rm d}\mu = \int_X g \, {\rm d}\mu \in \R.
%\end{equation*}
%\end{prop}
%%
%\begin{proof}
%It suffices to apply Fatou's lemma to both $ g_n-f_n $ and $ h_n-g_n $, and then let $ n \to \infty $.
%\end{proof}
%%
Now we present some tools that will be used in the sequel. The following result (cf.\ \cite[Exercise 4.16]{B}) furnishes a sufficient condition for weak convergence in Lebesgue spaces.
\begin{prop}
\label{Brezis}
Let $ \Omega \subseteq \R^N $ be a measurable set, $ 1<p<+\infty $, and $ \{f_n\} \subseteq L^p(\Omega) $. Suppose that $ \{f_n\} $ is bounded in $ L^p(\Omega) $ and $ f_n \to f $ a.e.\ in $ \Omega $ for some measurable function $ f: \Omega \to \R $. Then $ f \in L^p(\Omega) $ and $ f_n \rightharpoonup f $ in $ L^p(\Omega) $.
\end{prop}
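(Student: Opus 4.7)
The plan is the classical argument alluded to in Brezis's exercise, combining Fatou's lemma with reflexivity and Egorov's theorem.

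First I would obtain $ f \in L^p(\Omega) $ by applying Fatou's lemma to $ |f_n|^p $: the pointwise convergence $ |f_n|^p \to |f|^p $ a.e.\ together with the uniform bound $ \sup_n \int_\Omega |f_n|^p < +\infty $ immediately gives $ \int_\Omega |f|^p \leq \liminf_n \int_\Omega |f_n|^p < +\infty $.

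For the weak convergence I would use the subsequence principle: it is enough to show that every subsequence of $ \{f_n\} $ admits a further subsequence converging weakly to $ f $ in $ L^p(\Omega) $. Fix such a subsequence, still denoted $ \{f_n\} $. Since $ 1<p<+\infty $, $ L^p(\Omega) $ is reflexive, so boundedness yields a further subsequence $ f_{n_k} \rightharpoonup g $ in $ L^p(\Omega) $ for some $ g \in L^p(\Omega) $. The heart of the proof is the identification $ g = f $ a.e.

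To carry out this identification I would fix an arbitrary measurable $ A \subseteq \Omega $ with $ |A| < +\infty $ and, for each $ \delta > 0 $, invoke Egorov's theorem to produce $ B \subseteq A $ with $ |A \setminus B| < \delta $ on which $ f_{n_k} \to f $ uniformly. Since $ |B| < +\infty $ one has $ \chi_B \in L^{p'}(\Omega) $; weak convergence then gives $ \int_B f_{n_k} \to \int_B g $, while uniform convergence on $ B $ gives $ \int_B f_{n_k} \to \int_B f $, whence $ \int_B f = \int_B g $. Because $ f,g \in L^p(\Omega) $ and $ |A| < +\infty $, Hölder places $ f-g $ in $ L^1(A) $, so letting $ \delta \to 0 $ and using dominated convergence yields $ \int_A f = \int_A g $ for every finite-measure $ A $. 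Testing against $ A = \{f > g\} \cap B_R(0) $ and $ A = \{f < g\} \cap B_R(0) $ and letting $ R \to +\infty $ (which exhausts $ \R^N $ by $ \sigma $-finiteness) forces $ f = g $ a.e.\ in $ \Omega $. The subsequence principle then promotes this to weak convergence of the whole sequence.

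The main obstacle is precisely this identification step: weak convergence by itself transports no pointwise information, so one needs a bridge between the a.e.\ hypothesis and the dual pairing with $ L^{p'} $. Egorov's theorem supplies exactly that bridge, but only on sets of finite measure; this is why the reduction to indicators of finite-measure sets (rather than general test functions) and the $ \sigma $-finiteness of $ \R^N $ are both essential to the argument.
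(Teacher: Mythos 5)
Your proof is correct and complete: Fatou's lemma for $ f \in L^p(\Omega) $, reflexivity plus the subsequence principle for the weak convergence, and the Egorov-based identification of the weak limit with the a.e.\ limit on finite-measure sets are all carried out soundly (including the passage $ \delta \to 0 $ via absolute continuity of the integral and the exhaustion of $ \Omega $ by balls). The paper itself offers no proof — it only cites \cite[Exercise 4.16]{B} — and your argument is precisely the standard one expected for that exercise, so there is nothing to reconcile.
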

Our argument relies on the sub-super-solution theorem below (Theorem \ref{subsuperthm}), whose proof is based on the $ {\rm (S_+)} $ property and on the theory of pseudo-monotone operators.
\begin{defi}
Let $ (X,\|\cdot\|) $ be a Banach space. An operator $ A: X \to X^* $ is
\begin{itemize}
\item \textit{of type $ {\rm (S_+)} $} if, for any $ \{u_n\} \subseteq X $ and $ u \in X $ such that
\begin{equation}
\label{S+pseudohyp}
u_n \rightharpoonup u \quad \mbox{in} \;\; X, \quad \limsup_{n \to \infty} \langle A(u_n),u_n-u \rangle \leq 0,
\end{equation}
one has $ u_n \to u $ in $ X $;
\item \textit{pseudo-monotone} if, for any $ \{u_n\} \subseteq X $ and $ u \in X $ satisfying \eqref{S+pseudohyp}, one has
\begin{linenomath}
\begin{equation*}
\langle A(u), u-v \rangle \leq \liminf_{n \to \infty} \langle A(u_n),u_n-v \rangle  \quad \forall v \in X.
\end{equation*}
\end{linenomath}
\end{itemize}
\end{defi}
\begin{rmk}
\label{S+pseudormk}
It is readily seen that any continuous operator of type $ {\rm (S_+)} $ is pseudo-monotone (demi-continuity of $ A $ suffices; see, e.g., \cite[Lemma 6.7]{F}).
\end{rmk}
\begin{lemma}
\label{S+}
Let $ (X,\|\cdot\|) $ be a Banach space. Suppose that $ A: X \to X^* $ is a $ {\rm (S_+)} $ operator, while $ B:X \to X^* $ is a compact operator. Then $ C := A+B $ is a $ {\rm (S_+)} $ operator.
\end{lemma}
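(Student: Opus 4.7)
The plan is to verify the defining condition of the $(S_+)$ property for $C=A+B$ directly, by reducing it to the same property for $A$ after showing that the perturbation term $\langle B(u_n),u_n-u\rangle$ is negligible in the limit.

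I would take $\{u_n\}\subseteq X$ and $u\in X$ with $u_n\rightharpoonup u$ in $X$ and $\limsup_{n\to\infty}\langle C(u_n),u_n-u\rangle\leq 0$, and aim to prove $u_n\to u$. Since $\{u_n\}$ converges weakly, it is bounded, so by compactness of $B$ the set $\{B(u_n)\}$ is relatively compact in $X^*$.

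The key step is to show that $\langle B(u_n),u_n-u\rangle\to 0$. I would argue via subsequences: for any subsequence $\{u_{n_k}\}$, extract a further subsequence (not relabeled) along which $B(u_{n_k})\to\xi$ strongly in $X^*$ for some $\xi\in X^*$. Writing
\begin{equation*}
\langle B(u_{n_k}),u_{n_k}-u\rangle=\langle B(u_{n_k})-\xi,u_{n_k}-u\rangle+\langle\xi,u_{n_k}-u\rangle,
\end{equation*}
the first term is bounded by $\|B(u_{n_k})-\xi\|_{X^*}\,\|u_{n_k}-u\|_X$ and vanishes (strong-times-bounded), while the second vanishes because $u_{n_k}-u\rightharpoonup 0$ paired against a fixed element of $X^*$. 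As every subsequence admits a sub-subsequence along which the scalar quantity tends to zero, the whole sequence converges to zero.

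Combining the two facts,
\begin{equation*}
\limsup_{n\to\infty}\langle A(u_n),u_n-u\rangle=\limsup_{n\to\infty}\langle C(u_n),u_n-u\rangle-\lim_{n\to\infty}\langle B(u_n),u_n-u\rangle\leq 0,
\end{equation*}
so the $(S_+)$ property of $A$ gives $u_n\to u$ in $X$, as desired. I expect no real obstacle here; the only subtlety is that compactness of $B$ does not by itself yield $B(u_n)\to B(u)$ (no continuity of $B$ is assumed in the statement), which is precisely why the argument is routed through the subsequence extraction above rather than through a direct passage to the limit.
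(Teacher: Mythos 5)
Your proof is correct and follows essentially the same route as the paper: use compactness of $B$ to get strong convergence of $B(u_n)$ along subsequences, pair it with $u_n-u\rightharpoonup 0$ to kill the perturbation term, and reduce to the $(S_+)$ property of $A$. Your subsequence-of-subsequences step showing $\langle B(u_n),u_n-u\rangle\to 0$ for the full sequence is in fact slightly more careful than the paper's ``up to subsequences'' shortcut, and correctly flags that no continuity of $B$ is needed.
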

\begin{proof}
Let $ \{u_n\} \subseteq X $ and $ u \in X $ such that $ u_n \rightharpoonup u $ in $ X $ and
\begin{equation}
\label{limsupS+}
\limsup_{n \to \infty} \langle C(u_n), u_n-u \rangle \leq 0.
\end{equation}
Up to subsequences, compactness of $ B $ produces $ B(u_n) \to \psi $ in $ X^* $ for some $ \psi \in X^* $. So \eqref{limsupS+} reads as
\begin{linenomath}
\begin{equation*}
0 \geq \limsup_{n \to \infty} \langle A(u_n), u_n-u \rangle + \lim_{n \to \infty} \langle B(u_n), u_n-u \rangle = \limsup_{n \to \infty} \langle A(u_n), u_n-u \rangle.
\end{equation*}
\end{linenomath}
Hence, the $ {\rm (S_+)} $ property of $ A $ gives $ u_n \to u $ in $ X $, as desired.
\end{proof}
\begin{defi}
Let $ \Omega \subseteq \R^N $ be a bounded domain. Consider the problem 
\begin{equation}
\label{subsuperprob}
\left\{
\begin{alignedat}{2}
-\Div a(\nabla u) &= f(x,u,\nabla u) \quad &&\mbox{in} \;\; \Omega, \\
u &= 0 \quad &&\mbox{on} \;\; \partial \Omega,
\end{alignedat}
\right.
\end{equation}
where $ a $ satisfies $ {\rm (a_1)} $--$ {\rm (a_3)} $ and $ f: \Omega \times \R \times \R^N \to \R $ is a Carathéodory function. A function $ \underline{u} \in W^{1,p}(\Omega) $ is a \textit{sub-solution} to \eqref{subsuperprob} if $ \underline{u} \leq 0 $ on $ \partial \Omega $ in the sense of traces and
\begin{linenomath}
\begin{equation*}
\int_\Omega a(\nabla \underline{u}) \cdot \nabla \phi \leq \int_\Omega f(x,\underline{u},\nabla \underline{u}) \phi
\end{equation*}
\end{linenomath}
for all $ \phi \in W^{1,p}_0(\Omega) $ such that $ \phi \geq 0 $ a.e.\ in $ \Omega $. Analogously, $ \overline{u} \in W^{1,p}(\Omega) $ is a \textit{super-solution} to \eqref{subsuperprob} if $ \overline{u} \geq 0 $ on $ \partial \Omega $ in the sense of traces and
\begin{linenomath}
\begin{equation*}
\int_\Omega a(\nabla \overline{u}) \cdot \nabla \phi \geq \int_\Omega f(x,\overline{u},\nabla \overline{u}) \phi
\end{equation*}
\end{linenomath}
for all $ \phi \in W^{1,p}_0(\Omega) $ such that $ \phi \geq 0 $ a.e.\ in $ \Omega $.
\end{defi}
\begin{thm}[Sub-super-solution theorem]
\label{subsuperthm}
Let $ \Omega \subseteq \R^N $ be a bounded domain, and let $ \underline{u},\overline{u} \in W^{1,p}(\Omega) $ be a sub- and a super-solution of \eqref{subsuperprob}, respectively. Suppose $ \underline{u} \leq \overline{u} $ a.e.\ in $ \Omega $ and the following local growth condition for $ f $:
\begin{equation}
\label{localgrowth}
\begin{split}
|f(x,s,\xi)| \leq h(x) + k(x)|\xi|^r \quad \quad \quad \\
\mbox{for a.a.} \;\; x \in \Omega, \;\; \forall s \in [\underline{u}(x),\overline{u}(x)], \;\; \forall \xi \in \R^N,
\end{split}
\end{equation}
where $ h \in L^\eta(\Omega) $, $ k \in L^\theta(\Omega) $, and \eqref{hyphkr} holds true. Then there exists $ u \in W^{1,p}_0(\Omega) $ weak solution to \eqref{subsuperprob}.
\end{thm}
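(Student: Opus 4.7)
The strategy is to convert \eqref{subsuperprob} into a pseudo-monotone operator equation on $W^{1,p}_0(\Omega)$ by truncating the reaction term in both its $u$-slot and its $\xi$-slot and simultaneously adding a penalty that forbids the solution from leaving the order interval $[\underline{u},\overline{u}]$; the equation is then solved via Brezis' surjectivity theorem for pseudo-monotone operators, and a standard test-function argument a posteriori removes both the truncation and the penalty.

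\emph{Step 1 (truncation and penalization).} Define the truncation $Tu:=\max\{\underline{u},\min\{u,\overline{u}\}\}$, whose gradient equals $\chi_{\{\underline{u}\leq u\leq\overline{u}\}}\nabla u+\chi_{\{u>\overline{u}\}}\nabla\overline{u}+\chi_{\{u<\underline{u}\}}\nabla\underline{u}$ a.e. For a parameter $\lambda>0$ consider the auxiliary problem
\begin{equation*}
-\Div a(\nabla u) + \lambda\bigl[(u-\overline{u})_+^{p-1}-(\underline{u}-u)_+^{p-1}\bigr] = f(x,Tu,\nabla Tu) \;\mbox{ in }\Omega,\; u=0 \mbox{ on }\partial\Omega.
\end{equation*}
Since $Tu(x)\in[\underline{u}(x),\overline{u}(x)]$ a.e., assumption \eqref{localgrowth} becomes $|f(x,Tu,\nabla Tu)|\leq h(x)+k(x)|\nabla Tu|^r$ for every $u\in W^{1,p}_0(\Omega)$, a pointwise bound now valid on the whole Sobolev space.

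\emph{Step 2 (existence for the auxiliary problem).} Define $A,\mathcal{P},\mathcal{F}:W^{1,p}_0(\Omega)\to W^{-1,p'}(\Omega)$ by
\begin{equation*}
\langle A(u),v\rangle:=\int_\Omega a(\nabla u)\cdot\nabla v, \quad \langle \mathcal{P}(u),v\rangle:=\int_\Omega\bigl[(u-\overline{u})_+^{p-1}-(\underline{u}-u)_+^{p-1}\bigr]v,
\end{equation*}
and $\langle \mathcal{F}(u),v\rangle:=\int_\Omega f(x,Tu,\nabla Tu)\,v$. Via $(a_1)$--$(a_3)$ and the Leray-Lions scheme recalled in Remark \ref{standardrmk}, $A$ is bounded, demi-continuous and of type $(S_+)$, while $\mathcal{P}$ is bounded, continuous and monotone. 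The bound of Step 1, the Sobolev embedding $W^{1,p}_0(\Omega)\hookrightarrow L^{p^*}(\Omega)$, its compactness into every $L^q$ with $q<p^*$, Hölder's inequality with exponents dictated by \eqref{hyphkr}, and a Vitali-type argument imply that $\mathcal{F}$ is completely continuous. Therefore $\Psi:=A+\lambda\mathcal{P}-\mathcal{F}$ is bounded, continuous, and of type $(S_+)$ by Lemma \ref{S+}, hence pseudo-monotone by Remark \ref{S+pseudormk}. For coercivity, $(a_2)$ gives $\langle A(u),u\rangle\geq m\|\nabla u\|_{L^p}^p$, while
\begin{equation*}
|\langle \mathcal{F}(u),u\rangle|\leq\|h\|_{L^\eta}\|u\|_{L^{\eta'}}+\|k\|_{L^\theta}\|\nabla Tu\|_{L^p}^r\|u\|_{L^{q}},\quad \tfrac{1}{q}=1-\tfrac{1}{\theta}-\tfrac{r}{p}\geq\tfrac{1}{p^*},
\end{equation*}
where the exponent condition comes precisely from \eqref{hyphkr}; since $r+1<p$, the $(a_2)$-term dominates as $\|\nabla u\|_{L^p}\to\infty$. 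Brezis' surjectivity theorem then furnishes $u\in W^{1,p}_0(\Omega)$ with $\Psi(u)=0$.

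\emph{Step 3 (comparison and closure).} Test the auxiliary equation with $\phi:=(u-\overline{u})_+\in W^{1,p}_0(\Omega)$, admissible because $\overline{u}\geq 0\geq u=0$ on $\partial\Omega$ in the trace sense. On $\{u>\overline{u}\}$ one has $Tu=\overline{u}$ and $\nabla Tu=\nabla\overline{u}$, so subtracting the super-solution inequality for $\overline{u}$ tested with the same $\phi$ yields
\begin{equation*}
\int_\Omega (a(\nabla u)-a(\nabla\overline{u}))\cdot\nabla (u-\overline{u})_+ + \lambda\int_\Omega (u-\overline{u})_+^p\leq 0.
\end{equation*}
Both integrals are nonnegative by $(a_3)$, so $(u-\overline{u})_+\equiv 0$; the symmetric choice $\phi=(\underline{u}-u)_+$ gives $u\geq\underline{u}$. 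Hence $Tu=u$, $\nabla Tu=\nabla u$, the penalty vanishes, and $u$ is a weak solution of \eqref{subsuperprob}. The crux of the argument is the coupling between the $\xi$-dependence of $f$ and the comparison step: truncating only the $s$-slot would leave $f(x,\overline{u},\nabla u)$ on $\{u>\overline{u}\}$, incompatible with the $f(x,\overline{u},\nabla\overline{u})$ provided by the super-solution inequality; the replacement $\nabla u\mapsto\nabla Tu$ inside $f$ restores the pointwise match, while the sharp condition \eqref{hyphkr}, together with $r<p-1$, is exactly what preserves both the complete continuity of $\mathcal{F}$ and the coercivity of $\Psi$.
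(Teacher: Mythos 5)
Your proposal is correct and follows essentially the same route as the paper: truncate $f$ in both the $s$-slot and the $\xi$-slot along the order interval, solve the resulting equation by the theory of coercive pseudo-monotone operators, and then test with $(u-\overline{u})_+$ (resp.\ $(\underline{u}-u)_+$) to remove the truncation. The only structural difference is your penalty term $\lambda\mathcal{P}$, which the paper omits: without it, the comparison step still closes because the $F$-difference vanishes on $\{u>\overline{u}\}$ and ${\rm (a_3)}$ forces $\nabla(u-\overline{u})_+=0$ a.e., whence $(u-\overline{u})_+=0$ by the Poincar\'e inequality; your penalty makes that last implication immediate at the (routine) cost of checking that $\mathcal{P}$ is bounded, continuous, monotone, and does not spoil coercivity. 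Both variants are sound, and your identification of why the gradient slot must be truncated too — so that $f(x,Tu,\nabla Tu)$ matches $f(x,\overline{u},\nabla\overline{u})$ pointwise on $\{u>\overline{u}\}$ — is exactly the point the paper's proof exploits.
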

\begin{proof}
According to \eqref{hyphkr}, there exists $ s \in (1,p^*) $ such that
\begin{equation}
\label{s}
\frac{1}{\eta} < \frac{1}{s'} \quad \mbox{and} \quad \frac{1}{\theta} + \frac{r}{p} < \frac{1}{s'}.
\end{equation}
Consider the truncation operator $ T: W^{1,p}_0(\Omega) \to W^{1,p}_0(\Omega) $,
\begin{linenomath}
\begin{equation*}
T(u)(x) := \left\{
\begin{array}{ll}
\underline{u}(x) \quad &\mbox{if} \;\; u(x) < \underline{u}(x), \\
u(x) \quad &\mbox{if} \;\; \underline{u}(x) \leq u(x) \leq \overline{u}(x), \\
\overline{u}(x) \quad &\mbox{if} \;\; u(x) > \overline{u}(x),
\end{array}
\right.
\end{equation*}
\end{linenomath}
which is well defined since $ \underline{u} \leq 0 \leq \overline{u} $ on $ \partial \Omega $. Moreover, recalling \eqref{localgrowth}--\eqref{s}, define the operator $ N_T: W^{1,p}_0(\Omega) \to L^{s'}(\Omega) $ as
\begin{linenomath}
\begin{equation*}
N_T(u) := f(x,T(u),\nabla T(u)).
\end{equation*}
\end{linenomath}
Let $ i: W^{1,p}_0(\Omega) \to L^s(\Omega) $ be the canonical embedding and let $ i^*: L^{s'}(\Omega) \to W^{-1,p'}(\Omega) $ be its adjoint. Then consider $ F: W^{1,p}_0(\Omega) \to W^{-1,p'}(\Omega) $ defined as
\begin{linenomath}
\begin{equation*}
F := i^* \circ N_T.
\end{equation*}
\end{linenomath}
By \eqref{localgrowth}--\eqref{s} and H\"older's inequality, for any $ u \in W^{1,p}_0(\Omega) $ we have
\begin{equation}
\label{nemytskii}
\begin{split}
\|N_T(u)\|_{L^{s'}(\Omega)}^{s'} &\leq \int_\Omega (h+k|\nabla T(u)|^r)^{s'} \leq c_s \int_\Omega (h^{s'} + k^{s'}|\nabla u|^{rs'}) \\
&\leq c(\|h\|_{L^\eta(\Omega)}^{s'} + \|k\|_{L^\theta(\Omega)}^{s'} \|\nabla u\|_{L^p(\Omega)}^{rs'}),
\end{split}
\end{equation}
being $ c = c(|\Omega|,p,r,s,\eta,\theta) > 0 $ a suitable constant; thus $ N_T $ is bounded. To show the continuity of $ N_T $, let $ \{u_n\} \subseteq W^{1,p}_0(\Omega) $, $ u \in W^{1,p}_0(\Omega) $ such that $ u_n \to u $ in $ W^{1,p}_0(\Omega) $, and reason up to subsequences. According to \cite[Theorem 4.9]{B}, there exists $ U \in L^p(\Omega) $ such that
\begin{equation}
\label{reverselebesgue}
\begin{alignedat}{2}
&\nabla u_n(x) \to \nabla u(x) \quad &&\mbox{for a.a.} \;\; x \in \Omega, \\
&|\nabla u_n(x)| \leq U(x) \quad &&\mbox{for a.a.} \;\; x \in \Omega, \quad \forall n \in \N.
\end{alignedat}
\end{equation}
Using \eqref{localgrowth}, \eqref{reverselebesgue}, \eqref{s}, and Young's inequality we get
\begin{linenomath}
\begin{equation*}
\begin{split}
|N_T(u_n)-N_T(u)|^{s'} &\leq (|N_T(u_n)|+|N_T(u)|)^{s'} \\
&\leq c_s(h^{s'} + k^{s'} U^{rs'}) \\
&\leq c(h^\eta + k^\theta + U^p + 1) \in L^1(\Omega),
\end{split}
\end{equation*}
\end{linenomath}
being $ c = c(|\Omega|,p,r,s,\eta,\theta) > 0 $. Then Lebesgue's dominated convergence theorem and \eqref{reverselebesgue} yield
\begin{linenomath}
\begin{equation*}
\int_\Omega |N_T(u_n)-N_T(u)|^{s'} \to 0,
\end{equation*}
\end{linenomath}
proving that $ N_T $ is continuous. Summarizing, $ N_T $ is bounded and continuous. Rellich-Kondrachov's theorem ensures that $ i $ is completely continuous, so Schauder's theorem (vide \cite{K}) guarantees that $ i^* $ enjoys the same property. Hence $ F $ is a completely continuous operator; in particular, it is bounded, continuous, and compact.

Now consider the $ \Phi $ defined in Remark \ref{standardrmk}. According to \cite[Theorem 2.109]{CLM}, we have that $ \Phi $ is a bounded, continuous, $ {\rm (S_+)} $ operator. Lemma \ref{S+} then ensures that $ \Phi - F $ is a bounded, continuous, $ {\rm (S_+)} $ operator, which in turn implies that it is pseudo-monotone, by virtue of Remark \ref{S+pseudormk}. Moreover, we observe that $ \Phi - F $ is coercive: indeed, from $ {\rm (a_2)} $, \eqref{nemytskii}, and \eqref{hyphkr}, besides exploiting H\"older's, Sobolev's, and Young's inequalities, we infer
\begin{linenomath}
\begin{equation*}
\begin{split}
\langle \Phi(u)-F(u), u \rangle &\geq m \|\nabla u\|_{L^p(\Omega)}^p - \|N_T(u)\|_{L^{(p^*)'}(\Omega)} \|u\|_{L^{p^*}(\Omega)} \\
&\geq m \|\nabla u\|_{L^p(\Omega)}^p - c \|N_T(u)\|_{L^{s'}(\Omega)} \|\nabla u\|_{L^p(\Omega)} \\
&\geq m \|\nabla u\|_{L^p(\Omega)}^p - c (\|h\|_{L^\eta(\Omega)}\|\nabla u\|_{L^p(\Omega)} + \|k\|_{L^\theta(\Omega)} \|\nabla u\|_{L^p(\Omega)}^{r+1}) \\
&\geq \frac{m}{2} \|\nabla u\|_{L^p(\Omega)}^p - c(\|h\|_{L^\eta(\Omega)}^{p'} + \|k\|_{L^\theta(\Omega)}^{\frac{p}{p-r-1}}),
\end{split}
\end{equation*}
\end{linenomath}
being $ c = c(|\Omega|,N,p,r,s,\eta,\theta) > 0 $ an opportune constant changing its value at each passage. Hence, the main theorem on pseudo-monotone operators \cite[Theorem 2.9]{CLM} furnishes $ u \in W^{1,p}_0(\Omega) $ such that
\begin{equation}
\label{comp1}
\Phi(u) - F(u) = 0 \quad \mbox{in} \;\; W^{-1,p'}(\Omega).
\end{equation}
Since $ \overline{u} $ is a super-solution to \eqref{subsuperprob}, it satisfies
\begin{equation}
\label{comp2}
\Phi(\overline{u}) - F(\overline{u}) \geq 0 \quad \mbox{in} \;\; W^{-1,p'}(\Omega).
\end{equation}
Subtracting \eqref{comp2} from \eqref{comp1} and testing with $ \phi = (u-\overline{u})_+ \in W^{1,p}_0(\Omega) $ leads to
\begin{equation}
\label{comp3}
\langle \Phi(u)-\Phi(\overline{u}), (u-\overline{u})_+ \rangle - \langle F(u)-F(\overline{u}), (u-\overline{u})_+ \rangle \leq 0.
\end{equation}
By definition of $ F $ we have
\begin{linenomath}
\begin{equation*}
\begin{split}
&\langle F(u)-F(\overline{u}), (u-\overline{u})_+ \rangle \\
&= \int_{\Omega \cap \{u > \overline{u}\}} [f(x,T(u),\nabla T(u)) - f(x,\overline{u},\nabla \overline{u})](u-\overline{u})_+ =  0.
\end{split}
\end{equation*}
\end{linenomath}
On the other hand, $ {\rm (a_3)} $ produces
\begin{linenomath}
\begin{equation*}
\langle \Phi(u)-\Phi(\overline{u}), (u-\overline{u})_+ \rangle = \int_{\Omega \cap \{u > \overline{u}\}} (a(\nabla u) - a(\nabla \overline{u})) \cdot (\nabla u - \nabla \overline{u}) \geq 0.
\end{equation*}
\end{linenomath}
Thus, through \eqref{comp3} and $ {\rm (a_3)} $, we get $ \nabla u = \nabla \overline{u} $ a.e.\ in $ \Omega \cap \{u > \overline{u}\} $, which implies $ \nabla \phi = 0 $ a.e.\ in $ \Omega $. Since $ \phi \in W^{1,p}_0(\Omega) $, we obtain $ \phi = 0 $ a.e.\ in $ \Omega $, whence $ u \leq \overline{u} $ a.e.\ in $ \Omega $. An analogous argument yields $ u \geq \underline{u} $ a.e.\ in $ \Omega $, concluding the proof.
\end{proof}
At the end of this preliminary section, we present two results about $ L^p $ convergence of gradients of solutions to elliptic equations; they rely on the corresponding pointwise counterpart, which is investigated, in a more general setting, in \cite{BM}. The first result has local nature.
\begin{lemma}
\label{gradconvlocal}
Let $ \Omega \subseteq \R^N $ be a domain and $ 1<s<p^* $. Let $ \{f_n\} \subseteq L^{s'}_{\rm loc}(\Omega) $ and $ \{u_n\} \subseteq W^{1,p}_{\rm loc}(\Omega) $ be such that
\begin{equation}
\label{seqprob}
-\Div a(\nabla u_n) = f_n(x) \quad \mbox{in} \;\; \mathscr{D}'(\Omega),
\end{equation}
where $ a $ satisfies $ {\rm (a_1)} $--$ {\rm (a_3)} $. Suppose $ \{f_n\} $ and $ \{u_n\} $ to be bounded, respectively, in $ L^{s'}_{\rm loc}(\Omega) $ and $ W^{1,p}_{\rm loc}(\Omega) $. Then there exists $ f \in L^{s'}_{\rm loc}(\Omega) $ and $ u \in W^{1,p}_{\rm loc}(\Omega) $ such that, up to subsequences, $ f_n \rightharpoonup f $ in $ L^{s'}(E) $ and $ u_n \to u $ in $ W^{1,p}(E) $ for any bounded domain $ E \Subset \Omega $. In particular, $ u $ solves
\begin{equation}
\label{limitprob}
-\Div a(\nabla u) = f(x) \quad \mbox{in} \;\; \mathscr{D}'(\Omega).
\end{equation}
\end{lemma}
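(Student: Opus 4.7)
My plan combines diagonal extraction, the pointwise gradient convergence of Boccardo--Murat, and a cut-off/monotonicity argument to upgrade a.e.\ convergence of gradients to strong convergence in $W^{1,p}(E)$. First, I would exhaust $\Omega$ by bounded smooth open sets $E_1 \Subset E_2 \Subset \cdots \Subset \Omega$ with $\bigcup_k E_k = \Omega$. The boundedness of $\{u_n\}$ in $W^{1,p}_{\rm loc}(\Omega)$ and of $\{f_n\}$ in $L^{s'}_{\rm loc}(\Omega)$, together with reflexivity and Rellich--Kondrachov, permits a diagonal extraction producing a subsequence (not relabeled) and limits $u \in W^{1,p}_{\rm loc}(\Omega)$, $f \in L^{s'}_{\rm loc}(\Omega)$ such that, for every $k$, $u_n \rightharpoonup u$ in $W^{1,p}(E_k)$, $u_n \to u$ in $L^p(E_k)$ and a.e., and $f_n \rightharpoonup f$ in $L^{s'}(E_k)$.

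Next, I would fix a bounded $E \Subset \Omega$ and observe that, since $s < p^*$, the Rellich embedding $W^{1,p}_0(E) \hookrightarrow L^s(E)$ is compact; duality then makes $L^{s'}(E) \hookrightarrow W^{-1,p'}(E)$ compact as well, so $f_n \to f$ strongly in $W^{-1,p'}(E)$. Coupled with $u_n \rightharpoonup u$ in $W^{1,p}(E)$ and \eqref{seqprob}, this is precisely the setting of \cite{BM}, which under ${\rm (a_1)}$--${\rm (a_3)}$ yields, up to a further subsequence, $\nabla u_n \to \nabla u$ a.e.\ in $\Omega$. By ${\rm (a_1)}$ and continuity of $a$, $\{a(\nabla u_n)\}$ is bounded in $L^{p'}_{\rm loc}(\Omega)$ and converges a.e.\ to $a(\nabla u)$, so Proposition \ref{Brezis} delivers $a(\nabla u_n) \rightharpoonup a(\nabla u)$ in $L^{p'}_{\rm loc}(\Omega)$. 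Passing to the limit in \eqref{seqprob} tested against any $\phi \in C_c^\infty(\Omega)$ then establishes \eqref{limitprob}.

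Finally, to upgrade to strong $W^{1,p}$ convergence, I would pick $\zeta \in C_c^\infty(\Omega)$ with $\zeta \equiv 1$ on $E$ and $0 \le \zeta \le 1$, and test \eqref{seqprob} with $(u_n-u)\zeta \in W^{1,p}_0(\supp\zeta)$. The right-hand side is $o(1)$ because $\|f_n\|_{L^{s'}}$ stays bounded while $(u_n-u)\zeta \to 0$ in $L^s$. Expanding the left-hand side via the product rule and adding/subtracting $a(\nabla u)\cdot(\nabla u_n-\nabla u)\zeta$ isolates
\[
I_n := \int_\Omega \bigl[a(\nabla u_n)-a(\nabla u)\bigr]\cdot(\nabla u_n-\nabla u)\,\zeta,
\]
with remainder terms $\int a(\nabla u_n)\cdot(u_n-u)\nabla\zeta \to 0$ (by $L^{p'}_{\rm loc}$-boundedness of $a(\nabla u_n)$ times $L^p$-strong convergence of $u_n-u$) and $\int a(\nabla u)\cdot(\nabla u_n-\nabla u)\zeta \to 0$ (by weak $L^p$ convergence of $\nabla u_n$ against the fixed integrand $a(\nabla u)\zeta \in L^{p'}$). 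Hence $I_n \to 0$; by ${\rm (a_3)}$ the integrand is non-negative, so it converges to $0$ in $L^1(E)$. The main obstacle I foresee is now distilling from this qualitative information a quantitative $L^p$ bound on $\nabla u_n - \nabla u$: I would combine the a.e.\ gradient convergence with a Vitali-type equi-integrability argument, exploiting ${\rm (H_a)_2}$ (equivalently, a Simon-type reverse inequality dominating $|\nabla u_n - \nabla u|^p$ by the monotonicity defect up to an equi-integrable remainder) to conclude $\nabla u_n \to \nabla u$ in $L^p(E)$, and therefore $u_n \to u$ in $W^{1,p}(E)$.
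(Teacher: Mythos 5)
Your proposal tracks the paper's proof through the diagonal extraction over an exhaustion of $\Omega$, the compact embedding $L^{s'}(E)\hookrightarrow W^{-1,p'}(E)$ giving $f_n\to f$ in $W^{-1,p'}(E)$, the Boccardo--Murat pointwise convergence of gradients, the weak convergence $a(\nabla u_n)\rightharpoonup a(\nabla u)$ in $L^{p'}_{\rm loc}$ via Proposition \ref{Brezis}, and the identification of \eqref{limitprob}; all of that is correct and essentially identical to Steps 1--2 of the paper. The divergence is in the upgrade to strong $W^{1,p}(E)$ convergence. You test with $(u_n-u)\zeta$ and correctly obtain that the monotonicity defect $I_n=\int_\Omega\zeta\,[a(\nabla u_n)-a(\nabla u)]\cdot(\nabla u_n-\nabla u)\to 0$; the paper instead tests with $u_n\psi$ and $u\psi$ to get convergence of the localized energies $\int\psi\,a(\nabla u_n)\cdot\nabla u_n\to\int\psi\,a(\nabla u)\cdot\nabla u$ and then concludes by Fatou's lemma (using only ${\rm (a_2)}$ and the a.e.\ gradient convergence) plus uniform convexity of $W^{1,p}(E)$. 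Two remarks on your closing step. First, as written it leans on ${\rm (H_a)_2}$ through a Simon-type inequality, whereas the lemma is stated under ${\rm (a_1)}$--${\rm (a_3)}$ only; bare strict monotonicity ${\rm (a_3)}$ yields no quantitative lower bound on the defect, and boundedness of $\{|\nabla u_n|^p\}$ in $L^1(E)$ does not by itself provide the equi-integrability a Vitali argument would need, so your route is valid in the paper's standing setting but silently narrows the lemma. Second, the hedge is unnecessary: expanding $I_n$ and using $a(\nabla u_n)\rightharpoonup a(\nabla u)$ in $L^{p'}$ against $\zeta\nabla u$ and $\nabla u_n\rightharpoonup\nabla u$ in $L^p$ against $\zeta\,a(\nabla u)$ gives $\int\zeta\,a(\nabla u_n)\cdot\nabla u_n\to\int\zeta\,a(\nabla u)\cdot\nabla u$, after which Fatou applied to the nonnegative integrand $\zeta\,a(\nabla u_n)\cdot\nabla u_n-m|\nabla u_n|^p\chi_E$ yields $\limsup_n\int_E|\nabla u_n|^p\le\int_E|\nabla u|^p$, and uniform convexity finishes the proof within the stated hypotheses --- exactly the paper's Steps 3--4.
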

\begin{proof}
\phantom{.} \\
\textit{\underline{Step 1}: pointwise convergence of gradients.} \\
Take any bounded domain $ E \Subset \Omega $ and reason up to subsequences. By reflexivity we have
\begin{equation}
\label{boccardo1}
u_n \rightharpoonup u \quad \mbox{in} \;\; W^{1,p}(E) \quad \mbox{and} \quad f_n \rightharpoonup f \quad \mbox{in} \;\; L^{s'}(E),
\end{equation}
for suitable $ u \in W^{1,p}(E) $ and $ f \in L^{s'}(E) $. Exploiting the compactness of the embeddings $ W^{1,p}(E) \hookrightarrow L^s(E) $, guaranteed by Rellich-Kondrachov's theorem, and $ L^{s'}(E) \hookrightarrow W^{-1,p'}(E) $, achieved via Schauder's theorem (see \cite{K}), we get
\begin{equation}
\label{boccardo2}
u_n \to u \quad \mbox{in} \;\; L^p(E) \quad \mbox{and} \quad f_n \to f \quad \mbox{in} \;\; W^{-1,p'}(E).
\end{equation}
By \eqref{boccardo1}--\eqref{boccardo2}, besides recalling Remark \ref{standardrmk}, we are in the position to apply \cite[Theorem 2.1]{BM}, which ensures that
\begin{equation}
\label{boccardo3}
\nabla u_n \to \nabla u \quad \mbox{a.e.\ in} \;\; E.
\end{equation}
Since $ E $ was arbitrary, a diagonal argument allows us to suppose $ f \in L^{s'}_{\rm loc}(\Omega) $ and $ u \in W^{1,p}_{\rm loc}(\Omega) $ in \eqref{boccardo1}--\eqref{boccardo2}, as well as
\begin{equation}
\label{pointconvgrad}
\nabla u_n \to \nabla u \quad \mbox{a.e.\ in} \;\; \Omega.
\end{equation}
Indeed, let us consider a sequence $ \{E_n\} $ of bounded domains such that $ E_n \Subset \Omega $ and $ E_n \nearrow \Omega $. According to \eqref{boccardo1}--\eqref{boccardo2}, there exist $ \{u_{k^{(1)}_n}\} $, subsequence of $ \{u_n\} $, and $ u^{(1)} $ such that
\begin{equation}
\label{diagonalu1}
\begin{split}
&u_{k^{(1)}_n} \rightharpoonup u^{(1)} \quad \mbox{in} \;\; W^{1,p}(E_1), \\
&u_{k^{(1)}_n} \to u^{(1)} \quad \mbox{in} \;\; L^p(E_1), \\
&u_{k^{(1)}_n} \to u^{(1)} \quad \mbox{a.e.\ in} \;\; E_1.
\end{split}
\end{equation}
By induction, for all $ j \in \N $, $ j > 1 $, there exist $ \{u_{k^{(j)}_n}\} $, subsequence of $ \{u_{k^{(j-1)}_n}\} $, and $ u^{(j)} $ such that
\begin{equation}
\label{diagonalu2}
\begin{split}
&u_{k^{(j)}_n} \rightharpoonup u^{(j)} \quad \mbox{in} \;\; W^{1,p}(E_j), \\
&u_{k^{(j)}_n} \to u^{(j)} \quad \mbox{in} \;\; L^p(E_j), \\
&u_{k^{(j)}_n} \to u^{(j)} \quad \mbox{a.e.\ in} \;\; E_j.
\end{split}
\end{equation}
Define $ u: \Omega \to \R $ as
\begin{linenomath}
\begin{equation*}
u(x) = \left\{
\begin{array}{ll}
u^{(1)}(x) \quad &\mbox{in} \;\; E_1, \\
u^{(j)}(x) \quad &\mbox{in} \;\; E_j \setminus E_{j-1}, \;\; \forall j > 1.
\end{array}
\right.
\end{equation*}
\end{linenomath}
By uniqueness of limit and \eqref{diagonalu1}--\eqref{diagonalu2}, one has $ u = u^{(j)} $ a.e.\ in $ E_j $ for all $ j \in \N $. Recalling that $ \{u_{k^{(n)}_n}\} $ is a subsequence of $ \{u_{k^{(j)}_n}\} $ for all $ j \in \N $, by \eqref{diagonalu1}--\eqref{diagonalu2} we get
\begin{linenomath}
\begin{equation*}
u_{k^{(n)}_n} \rightharpoonup u \quad \mbox{in} \;\; W^{1,p}(E_j) \quad \mbox{and} \quad u_{k^{(n)}_n} \to u \quad \mbox{in} \;\; L^p(E_j) \quad \forall j \in \N.
\end{equation*}
\end{linenomath}
Reasoning as before, \eqref{boccardo1}--\eqref{boccardo2} entail
\begin{equation}
\label{diagonalf}
f_{k^{(j)}_n} \rightharpoonup f^{(j)} \quad \mbox{in} \;\; L^{s'}(E_j) \quad \mbox{and} \quad f_{k^{(j)}_n} \to f^{(j)} \quad \mbox{in} \;\; W^{-1,p'}(E_j) \quad \forall j \in \N,
\end{equation}
for opportune $ \{k^{(j)}_n\} $ (extract from $ \{k^{(j-1)}_n\} $) and $ f^{(j)} \in L^{s'}(E_j) $. Define $ f: \Omega \to \R $ as
\begin{linenomath}
\begin{equation*}
f(x) = \left\{
\begin{array}{ll}
f^{(1)}(x) \quad &\mbox{in} \;\; E_1, \\
f^{(j)}(x) \quad &\mbox{in} \;\; E_j \setminus E_{j-1}, \;\; \forall j > 1.
\end{array}
\right.
\end{equation*}
\end{linenomath}
Pick $ i,j \in \N $ such that $ i < j $. For any $ g \in L^s(E_i) $ consider its extension $ \hat{g} \in L^s(E_j) $ defined by setting $ g \equiv 0 $ outside $ E_i $. We obtain, due to \eqref{diagonalf},
\begin{linenomath}
\begin{equation*}
\int_{E_i} gf^{(j)} = \int_{E_j} \hat{g}f^{(j)} = \lim_{n \to \infty} \int_{E_j} \hat{g}f_{k^{(n)}_n} = \lim_{n \to \infty} \int_{E_i} gf_{k^{(n)}_n} = \int_{E_i} gf^{(i)} \quad \forall g \in L^s(E_i).
\end{equation*}
\end{linenomath}
This implies $ f^{(i)} = f^{(j)} $ a.e.\ in $ E_i $, whence $ f = f^{(j)} $ a.e.\ in $ E_j $. In particular, by \eqref{diagonalf} we infer
\begin{linenomath}
\begin{equation*}
f_{k^{(n)}_n} \rightharpoonup f \quad \mbox{in} \;\; L^{s'}(E_j) \quad \mbox{and} \quad f_{k^{(n)}_n} \to f \quad \mbox{in} \;\; W^{-1,p'}(E_j) \quad \forall j \in \N.
\end{equation*}
\end{linenomath}
Moreover \eqref{boccardo3}, written for each $ E_j $, readily implies \eqref{pointconvgrad}.

\noindent
\textit{\underline{Step 2}: alternative formulations of \eqref{seqprob}--\eqref{limitprob}.} \\
Notice that $ {\rm (a_1)} $ produces
\begin{equation}
\label{bounda}
\int_E |a(\nabla u_n)|^{p'} \leq M^{p'} \int_E (|\nabla u_n|^{p-1} + 1)^{p'} \leq c_p M^{p'} (\|\nabla u_n\|_{L^p(E)}^p + |E|).
\end{equation}
Thus Proposition \ref{Brezis}, together with \eqref{boccardo1} and \eqref{boccardo3}, implies
\begin{equation}
\label{weakconv}
a(\nabla u_n) \rightharpoonup a(\nabla u) \quad \mbox{in} \;\; L^{p'}(E).
\end{equation}
A density argument applied to \eqref{seqprob} yields
\begin{equation}
\label{test1}
\int_\Omega a(\nabla u_n) \cdot \nabla \phi = \int_\Omega f_n \phi \quad \forall \phi \in W^{1,p}_0(\Omega), \quad \supp \phi \Subset E.
\end{equation}
Letting $ n \to \infty $ in \eqref{test1}, via \eqref{weakconv} and \eqref{boccardo1}, produces
\begin{equation}
\label{test2}
\int_\Omega a(\nabla u) \cdot \nabla \phi = \int_\Omega f \phi \quad \forall \phi \in W^{1,p}_0(\Omega), \quad \supp \phi \Subset E,
\end{equation}
which readily implies \eqref{limitprob}.

\noindent
\textit{\underline{Step 3}: convergence of local energy integrals.} \\
Fix any $ \psi \in C^\infty_c(\Omega) $ and take a bounded domain $ E $ such that $ \supp \psi \Subset E \Subset \Omega $. Choosing $ \phi = u_n \psi $ in \eqref{test1} and $ \phi = u\psi $ in \eqref{test2} produces
\begin{equation}
\label{test3}
\begin{split}
&\int_\Omega \psi a(\nabla u_n) \cdot \nabla u_n + \int_\Omega u_n a(\nabla u_n) \cdot \nabla \psi = \int_\Omega f_n u_n \psi, \\
&\int_\Omega \psi a(\nabla u) \cdot \nabla u + \int_\Omega u a(\nabla u) \cdot \nabla \psi = \int_\Omega f u \psi.
\end{split}
\end{equation}
Reasoning as in \eqref{bounda}--\eqref{weakconv} we obtain
\begin{linenomath}
\begin{equation*}
\begin{split}
\|a(\nabla u_n) \cdot \nabla \psi\|_{L^{p'}(E)}^{p'} &\leq \|\nabla \psi\|_\infty^{p'} M^{p'} \int_E (|\nabla u_n|^{p-1}+1)^{p'} \\
&\leq c (\|\nabla u_n\|_{L^p(E)}^p + 1)
\end{split}
\end{equation*}
\end{linenomath}
for a suitable $ c = c(p,M,\psi,|E|) > 0 $, whence
\begin{equation}
\label{weakconv2}
a(\nabla u_n) \cdot \nabla \psi \rightharpoonup a(\nabla u) \cdot \nabla \psi \quad \mbox{in} \;\; L^{p'}(E).
\end{equation}
Using \eqref{boccardo2} and \eqref{weakconv2} we get
\begin{equation}
\label{conv1}
\lim_{n \to \infty} \int_\Omega u_n a(\nabla u_n) \cdot \nabla \psi = \int_\Omega u a(\nabla u) \cdot \nabla \psi.
\end{equation}
Observe that \eqref{boccardo1} and the Rellich-Kondrachov theorem entail
\begin{linenomath}
\begin{equation*}
\|u_n\psi - u\psi\|_{L^s(E)} \leq \|\psi\|_{L^\infty(E)} \|u_n-u\|_{L^s(E)} \to 0,
\end{equation*}
\end{linenomath}
so
\begin{equation}
\label{conv2}
\lim_{n \to \infty} \int_\Omega f_n u_n \psi = \int_\Omega f u \psi,
\end{equation}
using \eqref{boccardo1} again. Subtracting \eqref{test3} term by term and letting $ n \to \infty $, besides using \eqref{conv1}--\eqref{conv2}, yield
\begin{equation}
\label{conv3}
\lim_{n \to \infty} \int_\Omega \psi a(\nabla u_n) \cdot \nabla u_n = \int_\Omega \psi a(\nabla u) \cdot \nabla u.
\end{equation}

\noindent
\textit{\underline{Step 4}: local convergence of $ \{u_n \} $ to $ u $ in $ W^{1,p} $.} \\
Fix any bounded domain $ E \Subset \Omega $. We want to prove that $ u_n \to u $ in $ W^{1,p}(E) $. Choose $ \psi \in C^\infty_c(\Omega) $ and a bounded domain $ F $ such that
\begin{linenomath}
\begin{equation*}
\psi \geq \chi_E \quad \mbox{and} \quad \supp \psi \Subset F \Subset \Omega.
\end{equation*}
\end{linenomath}
According to $ {\rm (a_2)} $, \eqref{pointconvgrad}, and \eqref{conv3}, by Fatou's lemma we deduce
\begin{linenomath}
\begin{equation*}
\begin{split}
\int_\Omega \psi a(\nabla u) \cdot \nabla u - m \int_E |\nabla u|^p &= \int_\Omega (\psi a(\nabla u) \cdot \nabla u - m|\nabla u|^p \chi_E) \\
&\leq \liminf_{n \to \infty} \int_\Omega (\psi a(\nabla u_n) \cdot \nabla u_n - m|\nabla u_n|^p \chi_E) \\
&= \int_\Omega \psi a(\nabla u) \cdot \nabla u - m \limsup_{n \to \infty} \int_E |\nabla u_n|^p,
\end{split}
\end{equation*}
\end{linenomath}
whence
\begin{equation}
\label{unifconv1}
\limsup_{n \to \infty} \int_E |\nabla u_n|^p \leq \int_E |\nabla u|^p.
\end{equation}
In addition, \eqref{boccardo2} implies
\begin{equation}
\label{unifconv2}
\lim_{n \to \infty} \int_E |u_n|^p = \int_E |u|^p.
\end{equation}
Hence \eqref{unifconv1}--\eqref{unifconv2} and the uniform convexity of $ W^{1,p}(E) $ lead to
\begin{linenomath}
\begin{equation*}
u_n \to u \quad \mbox{in} \;\; W^{1,p}(E). 
\end{equation*}
\end{linenomath}
\end{proof}
\begin{rmk}
For our purposes, in the sequel we will need only \eqref{pointconvgrad} and \eqref{limitprob}, but here we preferred to highlight the fact that convergence of gradients of solutions to \eqref{seqprob} is locally in $ L^p $, and not merely in $ L^q $ for any $ q < p $, as it occurs in the general framework of \cite{BM} (cf.\ (1.5) of \cite{BM}).
\end{rmk}
Strengthening the hypotheses of Lemma \ref{gradconvlocal} allows to prove the following global result.
\begin{lemma}
\label{gradconvglobal}
Let $ \Omega \subseteq \R^N $ be a bounded domain with smooth boundary and $ 1<s<p^* $. Let $ \{f_n\} \subseteq L^{s'}(\Omega) $ and $ \{u_n\} \subseteq W^{1,p}_0(\Omega) $ be such that
\begin{equation}
\label{seqprob2}
-\Div a(\nabla u_n) = f_n(x) \quad \mbox{in} \;\; W^{-1,p'}(\Omega),
\end{equation}
where $ a $ satisfies $ {\rm (a_1)} $--$ {\rm (a_3)} $. Suppose $ \{f_n\} $ and $ \{u_n\} $ to be bounded, respectively, in $ L^{s'}(\Omega) $ and $ W^{1,p}_0(\Omega) $. Then there exists $ f \in L^{s'}(\Omega) $ and $ u \in W^{1,p}_0(\Omega) $ such that $ f_n \rightharpoonup f $ in $ L^{s'}(\Omega) $ and $ u_n \to u $ in $ W^{1,p}_0(\Omega) $, up to subsequences. In particular, $ u $ solves
\begin{equation}
\label{limitprob2}
-\Div a(\nabla u) = f(x) \quad \mbox{in} \;\; W^{-1,p'}(\Omega).
\end{equation}
\end{lemma}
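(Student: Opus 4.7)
The plan is to extract weak limits and then leverage the ${\rm (S_+)}$ property of the operator $\Phi$ (recalled in Remark \ref{standardrmk}) to upgrade weak convergence to strong convergence in $W^{1,p}_0(\Omega)$. The crucial simplification with respect to Lemma \ref{gradconvlocal} is that $\Omega$ is bounded, so the Rellich-Kondrachov embedding $W^{1,p}_0(\Omega) \hookrightarrow L^s(\Omega)$, compact because $s < p^*$, is available globally and no localization procedure is needed.

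First, by reflexivity and the fact that $W^{1,p}_0(\Omega)$ is weakly closed in $W^{1,p}(\Omega)$, up to subsequences one may assume $u_n \rightharpoonup u$ in $W^{1,p}_0(\Omega)$ with $u \in W^{1,p}_0(\Omega)$, and $f_n \rightharpoonup f$ in $L^{s'}(\Omega)$ for some $f \in L^{s'}(\Omega)$. Since $u_n - u \in W^{1,p}_0(\Omega)$ is then an admissible test function for \eqref{seqprob2}, one obtains
\[\langle \Phi(u_n), u_n - u \rangle = \int_\Omega f_n (u_n - u).\]
By compactness of the embedding $W^{1,p}_0(\Omega) \hookrightarrow L^s(\Omega)$ one has $u_n \to u$ strongly in $L^s(\Omega)$, which combined with the $L^{s'}$-boundedness of $\{f_n\}$ and H\"older's inequality forces the right-hand side to vanish in the limit. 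Hence $\limsup_n \langle \Phi(u_n), u_n - u \rangle \leq 0$, and the ${\rm (S_+)}$ character of $\Phi$ (see Remark \ref{standardrmk}) yields $u_n \to u$ strongly in $W^{1,p}_0(\Omega)$.

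It remains to identify the limit equation. The growth condition ${\rm (a_1)}$ makes the Nemytskii map $v \mapsto a(\nabla v)$ continuous from $W^{1,p}_0(\Omega)$ into $L^{p'}(\Omega)$, so $a(\nabla u_n) \to a(\nabla u)$ strongly in $L^{p'}(\Omega)$; for any test $\phi \in W^{1,p}_0(\Omega) \hookrightarrow L^s(\Omega)$ the weak convergence $f_n \rightharpoonup f$ in $L^{s'}(\Omega)$ produces $\int_\Omega f_n \phi \to \int_\Omega f \phi$. Passing to the limit in \eqref{seqprob2} then gives \eqref{limitprob2}.

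I do not anticipate any serious obstacle: the argument is a clean ${\rm (S_+)}$ maneuver enabled by the global Rellich-Kondrachov compactness in the bounded setting. The only point worth a moment of care is verifying that $u$ lies in $W^{1,p}_0(\Omega)$, so that $u_n - u$ is a legitimate test function; this is immediate from the weak closedness of $W^{1,p}_0(\Omega)$ inside $W^{1,p}(\Omega)$. Note that Lemma \ref{gradconvlocal} is not actually needed in this bounded/global framework, since the ${\rm (S_+)}$ property bypasses the a.e.\ gradient convergence step altogether.
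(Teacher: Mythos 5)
Your proof is correct, but it follows a genuinely different route from the paper's. The paper deduces Lemma \ref{gradconvglobal} from Lemma \ref{gradconvlocal}: it first secures a.e.\ convergence of the gradients via the Boccardo--Murat theorem, then tests \eqref{seqprob2} with $u_n$ and \eqref{limitprob2} with $u$ to obtain convergence of the energies $\int_\Omega a(\nabla u_n)\cdot\nabla u_n \to \int_\Omega a(\nabla u)\cdot\nabla u$, and finally combines Fatou's lemma with $({\rm a_2})$ and the uniform convexity of $W^{1,p}_0(\Omega)$ to upgrade to strong convergence. You instead test with $u_n-u$, kill the right-hand side by Rellich--Kondrachov plus H\"older, and invoke the $({\rm S_+})$ property of $\Phi$ (which the paper itself cites from \cite[Theorem 2.109]{CLM} in the proof of Theorem \ref{subsuperthm}), so that Lemma \ref{gradconvlocal} and the Boccardo--Murat machinery are bypassed entirely. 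Your argument is shorter and cleaner in the bounded setting; the paper's detour through the local lemma has the advantage of reusing a single piece of machinery for both results (and the a.e.\ gradient convergence it produces is what is actually exploited downstream, e.g.\ in Lemma \ref{supersollemma}, to identify the weak limit of the nonlinear terms). One small point worth making explicit in your final step: passing to the limit in the equation requires continuity of $\xi\mapsto a(\xi)$ on all of $\R^N$ (including $\xi=0$), not just the growth bound $({\rm a_1})$; this is implicit in the Carath\'eodory/Leray--Lions framework of Remark \ref{standardrmk} and is equally needed in the paper's own proof, so it is not a gap, but it deserves a word. Alternatively, once $u_n\to u$ strongly in $W^{1,p}_0(\Omega)$, you could simply invoke the continuity of $\Phi$ from \cite[Theorem 2.109]{CLM} to get $\Phi(u_n)\to\Phi(u)$ in $W^{-1,p'}(\Omega)$ and conclude.
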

\begin{proof}
All the hypotheses of Lemma \ref{gradconvlocal} are satisfied, so \eqref{boccardo1}--\eqref{boccardo3} are guaranteed; due to the stronger hypotheses, they hold true with $ \Omega $ in place of $ E $. It is readily seen that also \eqref{test1}--\eqref{test2} holds for any $ \phi \in W^{1,p}_0(\Omega) $, without any restriction on $ \supp \phi $; in particular, this implies \eqref{limitprob2}, reasoning as in Lemma \ref{gradconvlocal}. Now we can test \eqref{seqprob2} with $ \phi = u_n $ and \eqref{limitprob2} with $ \phi = u $, obtaining
\begin{equation}
\label{adapt1}
\begin{split}
&\int_\Omega a(\nabla u_n) \cdot \nabla u_n = \int_\Omega f_n u_n, \\
&\int_\Omega a(\nabla u) \cdot \nabla u = \int_\Omega f u.
\end{split}
\end{equation}
Moreover, by the aforementioned considerations we deduce
\begin{equation}
\label{adapt2}
\lim_{n \to \infty} \int_\Omega f_n u_n = \int_\Omega f u.
\end{equation}
Arguing as in Lemma \ref{gradconvlocal}, \eqref{adapt1}--\eqref{adapt2} yield
\begin{linenomath}
\begin{equation*}
\lim_{n \to \infty} \int_\Omega a(\nabla u_n) \cdot \nabla u_n = \int_\Omega a(\nabla u) \cdot \nabla u.
\end{equation*}
\end{linenomath}
Thus Fatou's lemma, jointly with $ {\rm (a_2)} $ and \eqref{pointconvgrad}, gives
\begin{linenomath}
\begin{equation*}
\begin{split}
\int_\Omega a(\nabla u) \cdot \nabla u - m \int_\Omega |\nabla u|^p &= \int_\Omega (a(\nabla u) \cdot \nabla u - m |\nabla u|^p) \\
&\leq \liminf_{n \to \infty} \int_\Omega (a(\nabla u_n) \cdot \nabla u_n - m |\nabla u_n|^p) \\
&= \int_\Omega a(\nabla u) \cdot \nabla u - m \limsup_{n \to \infty} \int_\Omega |\nabla u_n|^p,
\end{split}
\end{equation*}
\end{linenomath}
ensuring
\begin{linenomath}
\begin{equation*}
\limsup_{n \to \infty} \int_\Omega |\nabla u_n|^p \leq \int_\Omega |\nabla u|^p.
\end{equation*}
\end{linenomath}
Uniform convexity of $ W^{1,p}_0(\Omega) $ then produces $ u_n \to u $ in $ W^{1,p}_0(\Omega) $.
\end{proof}

\section{Proof of the main result}

\subsection{Super-solution for regularized problems}

In this sub-section we suppose that $ a $ satisfies $ {\rm (a_1)} $--$ {\rm (a_3)} $, $ h,k $ are as in \eqref{hypf}--\eqref{hypg}, and \eqref{hyphkr} holds true. Consider the problem
\begin{equation}
\label{superprob}
\tag{$ {\rm \bar{P}} $}
\left\{
\begin{alignedat}{2}
-\Div a(\nabla u) &= h(x) + k(x)|\nabla u|^r \quad &&\mbox{in} \;\; \R^N, \\
u &\geq 0 \quad &&\mbox{in} \;\; \R^N.
\end{alignedat}
\right.
\end{equation}
This sub-section is devoted to prove existence of a distributional solution for \eqref{superprob}. We first study the following auxiliary problem, obtained by freezing the convection term in \eqref{superprob} and working on a ball $ B \subseteq \R^N $:
\begin{equation}
\label{frozenprob}
\tag{$ {\rm \bar{P}}_{v,B} $}
\left\{
\begin{alignedat}{2}
-\Div a(\nabla u) &= h(x) + k(x)|\nabla v(x)|^r \quad &&\mbox{in} \;\; B, \\
u &= 0 \quad &&\mbox{on} \;\; \partial B.
\end{alignedat}
\right.
\end{equation}

\begin{lemma}
\label{minty}
For any ball $ B \subseteq \R^N $ and any $ v \in W^{1,p}_0(B) $, problem \eqref{frozenprob} admits a unique weak solution $ u_v \in W^{1,p}_0(B) $. Moreover, $ u_v \geq 0 $ a.e.\ in $ B $.
\end{lemma}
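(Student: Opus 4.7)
The plan is to fix $v$, interpret the right-hand side $F(x) := h(x) + k(x)|\nabla v(x)|^r$ as a frozen datum living in $W^{-1,p'}(B)$, and then invoke monotone operator theory for the $a$-Laplacian. First I would choose $s\in(1,p^*)$ as in \eqref{s} so that $\frac{1}{\eta}<\frac{1}{s'}$ and $\frac{1}{\theta}+\frac{r}{p}<\frac{1}{s'}$. By Hölder's inequality, together with $v\in W^{1,p}_0(B)$, one then obtains
\begin{equation*}
\|F\|_{L^{s'}(B)} \leq \|h\|_{L^{s'}(B)} + \|k\|_{L^{s'\theta/(\theta-s')}(B)} \|\nabla v\|_{L^p(B)}^r \leq c\bigl(\|h\|_{L^\eta(B)} + \|k\|_{L^\theta(B)} \|\nabla v\|_{L^p(B)}^r\bigr).
\end{equation*}
Composing with the compact embedding $L^{s'}(B)\hookrightarrow W^{-1,p'}(B)$, which comes from Rellich-Kondrachov and Schauder's theorem exactly as in the proof of Theorem \ref{subsuperthm}, one concludes that $F$ defines an element of $W^{-1,p'}(B)$.

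Next, I would consider the operator $\Phi\colon W^{1,p}_0(B)\to W^{-1,p'}(B)$ introduced in Remark \ref{standardrmk}. By that remark (invoking \cite[Theorem 2.109]{CLM}) it is bounded, continuous and of type $(S_+)$; by $(a_3)$ it is strictly monotone; by $(a_2)$ it is coercive, since $\langle \Phi(u),u\rangle\geq m\|\nabla u\|_{L^p(B)}^p$. The main theorem on pseudo-monotone operators \cite[Theorem 2.9]{CLM}, applicable thanks to Remark \ref{S+pseudormk}, then furnishes a solution $u_v\in W^{1,p}_0(B)$ to $\Phi(u_v)=F$ in $W^{-1,p'}(B)$, which is precisely a weak solution of \eqref{frozenprob}; strict monotonicity of $\Phi$ forces this solution to be unique, by testing the difference of two solutions with the difference itself and using $(a_3)$.

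For the sign statement, I would test the weak formulation with $\phi:=(u_v)_- = \max\{-u_v,0\}\in W^{1,p}_0(B)$, which satisfies $\phi\geq 0$ and $\nabla\phi = -\nabla u_v \, \chi_{\{u_v<0\}}$. On the one hand, $(a_2)$ yields
\begin{equation*}
\int_B a(\nabla u_v)\cdot\nabla\phi = -\int_{\{u_v<0\}} a(\nabla u_v)\cdot\nabla u_v \leq -m\int_{\{u_v<0\}} |\nabla u_v|^p \leq 0;
\end{equation*}
on the other hand, $F\geq 0$ and $\phi\geq 0$ give $\int_B F\phi\geq 0$. Hence both quantities vanish, so $\nabla\phi=0$ a.e.\ in $B$; combined with the zero trace this forces $\phi\equiv 0$, i.e., $u_v\geq 0$ a.e.\ in $B$.

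No part of the argument is genuinely hard: the only point that requires care is selecting the exponent $s$ so that $F$ sits in a space compactly embedded into $W^{-1,p'}(B)$, which is exactly why \eqref{hyphkr} was imposed in the first place. Everything else is standard monotone-operator machinery plus the now-classical truncation test for the non-negativity assertion.
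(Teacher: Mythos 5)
Your proof is correct and follows essentially the same route as the paper: the right-hand side is estimated in a Lebesgue space embedding into $W^{-1,p'}(B)$, existence and uniqueness come from standard monotone-operator surjectivity (the paper cites Minty--Browder where you invoke the pseudo-monotone theorem, an immaterial difference here), and your truncation test with $(u_v)_-$ is just the explicit form of the weak maximum principle the paper cites for the sign condition.
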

\begin{proof}
Fix any $ B \subseteq \R^N $ and $ v \in W^{1,p}_0(B) $. According to the embedding inequality for $ L^{(p^*)'}(B) \hookrightarrow W^{-1,p'}(B) $, H\"older's inequality, and \eqref{hyphkr}, we have
\begin{linenomath}
\begin{equation*}
\begin{split}
\|h+k|\nabla v|^r\|_{W^{-1,p'}(B)} &\leq c \|h+k|\nabla v|^r\|_{L^{(p^*)'}(B)} \\
&\leq c (\|h\|_{L^{(p^*)'}(B)} + \|k|\nabla v|^r\|_{L^{(p^*)'}(B)}) \\
&\leq c(\|h\|_{L^\eta(\R^N)} + \|k\|_{L^\theta(\R^N)} \|\nabla v\|_{L^p(B)}^r),
\end{split}
\end{equation*}
\end{linenomath}
for a suitable $ c = c(|B|,N,p,r,\eta,\theta) > 0 $ changing its value from line to line. Hence, thanks to $ {\rm (a_1)} $--$ {\rm (a_3)} $ (see also Remark \ref{standardrmk}), Minty-Browder's theorem \cite[Theorem 5.16]{B} applies; so there exists a unique weak solution $ u_v \in W^{1,p}_0(B) $ to \eqref{frozenprob}. Since $ h,k \geq  0 $, the weak maximum principle guarantees $ u_v \geq 0 $ a.e.\ in $ B $.
\end{proof}
Keeping $ B $ fixed, we define the operator $ T:W^{1,p}_0(B) \to W^{1,p}_0(B) $ as
\begin{equation}
\label{Tdef}
T(v) = u_v,
\end{equation}
where $ u_v $ is given by Lemma \ref{minty}.
\begin{lemma}
\label{schauder}
For any ball $ B \subseteq \R^N $, the operator $ T $ defined in \eqref{Tdef} admits a fixed point $ u \in W^{1,p}_0(B) $. Moreover, $ u $ satisfies
\begin{equation}
\label{estimate}
\|\nabla u\|_{L^p(B)} \leq C,
\end{equation}
for a suitable constant $ C > 0 $ which is independent of $ B $.
\end{lemma}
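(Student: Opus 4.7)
The plan is to apply Schauder's fixed point theorem to $T$ on a closed ball of $W^{1,p}_0(B)$ whose radius depends only on the data, not on $B$. Specifically, I will exhibit $R>0$, independent of $B$, such that the closed convex set $\mathcal{K} := \{v \in W^{1,p}_0(B) : \|\nabla v\|_{L^p(B)} \leq R\}$ is mapped into itself by $T$, that $T$ restricted to $\mathcal{K}$ is continuous, and that $T(\mathcal{K})$ is relatively compact in $W^{1,p}_0(B)$. The fixed point $u$ produced by Schauder's theorem will then automatically satisfy \eqref{estimate} with $C=R$.

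To construct $R$, I derive an a priori bound. Given $v \in W^{1,p}_0(B)$, set $u_v := T(v)$ and test the weak formulation of \eqref{frozenprob} with $\phi = u_v$. Using $(a_2)$ on the left and H\"older's inequality on the right with exponents $\alpha := (\frac{1}{(p^*)'} - \frac{r}{p})^{-1}$, $p/r$, $p^*$, combined with the Sobolev embedding $W^{1,p}_0(B) \hookrightarrow L^{p^*}(B)$ (whose constant is that of $\R^N$ after extension by zero and is therefore $B$-independent), one obtains
\begin{equation*}
m \|\nabla u_v\|_{L^p(B)}^p \leq c_1 \|\nabla u_v\|_{L^p(B)} + c_2 \|\nabla v\|_{L^p(B)}^r \|\nabla u_v\|_{L^p(B)},
\end{equation*}
where $c_1, c_2$ depend on $\|h\|_{L^{(p^*)'}(\R^N)}$, $\|k\|_{L^\alpha(\R^N)}$, and the Sobolev constant. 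Both global norms are finite: since $h \in L^1 \cap L^\eta$ with $\eta > (p^*)'$, and $k \in L^1 \cap L^\theta$ with $\theta > \alpha$ thanks to \eqref{hyphkr} (note $\alpha > 1$ because $\frac{1}{(p^*)'}<1$), standard interpolation places the relevant exponents between $1$ and the higher integrability. Dividing by $\|\nabla u_v\|_{L^p(B)}$ yields
\begin{equation*}
\|\nabla u_v\|_{L^p(B)}^{p-1} \leq \frac{c_1}{m} + \frac{c_2}{m} \|\nabla v\|_{L^p(B)}^r.
\end{equation*}
Since $r < p-1$, the scalar inequality $\frac{c_1}{m} + \frac{c_2}{m} R^r \leq R^{p-1}$ holds for all sufficiently large $R$, producing the invariant ball $\mathcal{K}$ with $R$ depending only on the data.

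Continuity and compactness of $T$ are both consequences of Lemma \ref{gradconvglobal}. For any bounded sequence $\{v_n\} \subseteq \mathcal{K}$, the forcings $f_n := h + k|\nabla v_n|^r$ are bounded in $L^{s'}(B)$ for some $s \in (1,p^*)$ satisfying \eqref{s}, while the a priori estimate keeps $\{T(v_n)\}$ bounded in $W^{1,p}_0(B)$; Lemma \ref{gradconvglobal} then extracts a subsequence along which $T(v_n)$ converges strongly in $W^{1,p}_0(B)$, proving relative compactness of $T(\mathcal{K})$. For continuity, if $v_n \to v$ in $W^{1,p}_0(B)$ then, up to a subsequence, $\nabla v_n \to \nabla v$ a.e.\ with an $L^p$-dominant provided by \cite[Theorem 4.9]{B}; dominated convergence gives $f_n \to f$ in $L^{s'}(B)$, and Lemma \ref{gradconvglobal}, combined with the uniqueness statement in Lemma \ref{minty}, identifies the limit as $T(v)$. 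A standard subsequence argument then promotes this to convergence of the full sequence.

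The main technical point is securing $B$-independence of the a priori bound: the right-hand-side norms of $h$ and $k$ must be controlled by global norms on $\R^N$ (which is where the interpolation step and the exponent conditions in \eqref{hyphkr} are essential), and the Sobolev constant must be the one inherited from $\R^N$ rather than a $B$-dependent one. Once the uniform radius $R$ is isolated, Schauder's theorem produces a fixed point in $\mathcal{K}$ and the bound \eqref{estimate} comes along for free.
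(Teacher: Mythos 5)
Your proposal is correct and follows essentially the same route as the paper: the same a priori bound via testing with $u_v$, interpolation of $h$ and $k$ into $L^{(p^*)'}$ and $L^{\zeta}$ with $\zeta=\bigl(\tfrac{1}{(p^*)'}-\tfrac{r}{p}\bigr)^{-1}$, the $B$-independent Sobolev constant, the scalar inequality exploiting $r<p-1$ to get an invariant ball, and complete continuity of $T$ via Lemma \ref{gradconvglobal} together with the uniqueness from Lemma \ref{minty}. The only cosmetic difference is that for continuity you pass the forcing terms to the limit strongly in $L^{s'}(B)$ by dominated convergence, while the paper settles for weak convergence via Proposition \ref{Brezis}; both suffice.
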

\begin{proof}
According to \eqref{hyphkr}, by interpolation we have
\begin{linenomath}
\begin{equation*}
h \in L^{(p^*)'}(\R^N) \quad \mbox{and} \quad k \in L^\zeta(\R^N), \quad \mbox{with} \;\; \zeta = \left(\frac{1}{(p^*)'}-\frac{r}{p}\right)^{-1}.
\end{equation*}
\end{linenomath}
More precisely, the following estimates hold true (see \cite[Exercise 4.4]{B}):
\begin{equation}
\label{interpolation}
\begin{alignedat}{2}
&\|h\|_{L^{(p^*)'}(\R^N)} \leq \|h\|_{L^1(\R^N)}^{\lambda} \|h\|_{L^\eta(\R^N)}^{1-\lambda}, \quad &&\mbox{with} \;\; \lambda := \frac{\eta - (p^*)'}{\eta-1} \in (0,1), \\
&\|k\|_{L^{\zeta}(\R^N)} \leq \|k\|_{L^1(\R^N)}^{\mu} \|k\|_{L^\theta(\R^N)}^{1-\mu}, \quad &&\mbox{with} \;\; \mu := \frac{\theta - \zeta}{\theta-1} \in (0,1).
\end{alignedat}
\end{equation}
Testing \eqref{frozenprob} with $ u_v $, besides using $ {\rm (a_2)} $ as well as H\"older's and Sobolev's inequalities, gives
\begin{equation}
\label{energyest1}
\begin{split}
&m \|\nabla u_v\|_{L^p(B)}^p \leq \int_B a(\nabla u_v) \cdot \nabla u_v = \int_B (h + k|\nabla v|^r) u_v \\
&\leq (\|h\|_{L^{(p^*)'}(B)} + \|k\|_{L^\zeta(B)} \|\nabla v\|_{L^p(B)}^{r}) \|u_v\|_{L^{p^*}(B)} \\
&\leq c_S (\|h\|_{L^{(p^*)'}(\R^N)} + \|k\|_{L^\zeta(\R^N)} \|\nabla v\|_{L^p(B)}^{r}) \|\nabla u_v\|_{L^p(B)},
\end{split}
\end{equation}
where $ c_S = c_S(p,N) > 0 $ is the best constant of the inequality related to the Sobolev embedding $ W^{1,p}_0(B) \hookrightarrow L^{p^*}(B) $. We explicitly notice that $ c_S $ does not depend on $ B $; see \cite[p.290, Remark 20]{B}. Observe also that the estimates in \eqref{interpolation} are independent of $ B $. Dividing both sides of \eqref{energyest1} by $ m\|\nabla u_v\|_{L^p(B)} $ produces
\begin{equation}
\label{energyest2}
\begin{split}
&\|\nabla u_v\|_{L^p(B)}^{p-1} \leq c (1 + \|\nabla v\|_{L^p(B)}^r),
\end{split}
\end{equation}
being $ c = c(p,N,\|h\|_{(p^*)'},\|k\|_\zeta,m) > 0 $. Since $ r < p-1 $, we can choose $ C > 0 $ sufficiently large such that
\begin{equation}
\label{trapping1}
C^{p-1} \geq c(1+C^r),
\end{equation}
where $ c $ stems from \eqref{energyest2}. We define the closed, convex set
\begin{equation}
\label{trapping2}
K = \{u \in W^{1,p}_0(B): \, \|\nabla u\|_{L^p(B)} \leq C\},
\end{equation}
being $ C = C(p,N,r,\|h\|_{(p^*)'},\|k\|_\zeta,m) > 0 $ as in \eqref{trapping1}; actually, $ K $ is a ball in $ W^{1,p}_0(B) $. According to \eqref{energyest2}--\eqref{trapping2}, the set $ K $ is invariant under $ T $, that is,
\begin{equation}
\label{invariant}
T(K) \subseteq K.
\end{equation}

\noindent
\textit{\underline{Claim}: $ T $ is a completely continuous operator.} \\
First we prove compactness of $ T $. Take any bounded sequence $ \{v_n\} \subseteq W^{1,p}_0(B) $, set $ u_n := u_{v_n} $ for all $ n \in \N $, and reason up to subsequences. By reflexivity of $ W^{1,p}_0(B) $, there exists $ v \in W^{1,p}_0(B) $ such that
\begin{equation}
\label{vnweakconv}
v_n \rightharpoonup v \quad \mbox{in} \;\; W^{1,p}_0(B).
\end{equation}
Exploiting reflexivity again, \eqref{energyest2} and \eqref{vnweakconv} yield
\begin{equation}
\label{properties}
u_n \rightharpoonup u \quad \mbox{in} \;\; W^{1,p}_0(B), \quad u_n \to u \quad \mbox{in} \;\; L^p(B), \quad u_n \to u \quad \mbox{a.e.\ in} \;\; B,
\end{equation}
for some $ u \in W^{1,p}_0(B) $. According to \eqref{hyphkr}, we can choose $ s \in (1,p^*) $ satisfying \eqref{s}. Thus, by H\"older's inequality, we have
\begin{equation}
\label{unifbound}
\|h+k|\nabla v_n|^r\|_{L^{s'}(B)} \leq c(\|h\|_{L^\eta(\R^N)} + \|k\|_{L^\theta(\R^N)} \|\nabla v_n\|_{L^p(B)}^r)
\end{equation}
for a suitable $ c = c(|B|,p,r,s,\eta,\theta) > 0 $. The right-hand side of \eqref{unifbound} is uniformly bounded in $ n $, due to \eqref{vnweakconv}. Hence, we can apply Lemma \ref{gradconvglobal}, via \eqref{properties}--\eqref{unifbound}, to get $ u_n \to u $ in $ W^{1,p}_0(B) $. Since $ \{v_n\} $ was arbitrary, we deduce that $ T $ is a compact operator. \\
To prove continuity of $ T $, we suppose $ v_n \to v $ in $ W^{1,p}_0(B) $. Notice that \eqref{unifbound}, obtained under the weaker condition \eqref{vnweakconv}, holds true. Extracting a subsequence if necessary, we have $ \nabla v_n \to \nabla v $ a.e.\ in $ B $, so
\begin{equation}
\label{pointconv}
h+k|\nabla v_n|^r \to h+k|\nabla v|^r \quad \mbox{a.e.\ in} \;\; B.
\end{equation}
Using Proposition \ref{Brezis}, jointly with \eqref{unifbound}--\eqref{pointconv}, we get
\[
h+k|\nabla v_n|^r \rightharpoonup h+k|\nabla v|^r \quad \mbox{in} \;\; L^{s'}(B).
\]
Thus Lemma \ref{gradconvglobal} ensures that $ u $ is a weak solution to \eqref{frozenprob}, according to \eqref{limitprob2}. Uniqueness of solution to \eqref{frozenprob}, guaranteed by Lemma \ref{minty}, yields $ u = T(v) $. Arbitrariness of $ \{v_n\} $ provides the continuity of $ T $. Summarizing, $ T $ is a completely continuous operator, which proves the claim.

Recalling also \eqref{invariant}, Schauder's fixed point theorem \cite[Corollary 11.2]{GT} produces $ u \in K $ fixed point of $ T $; estimate \eqref{estimate} then follows by \eqref{trapping2}--\eqref{invariant}.
\end{proof}
Assuming, without loss of generality, $ x_0 = 0 $ in \eqref{hypf}, for all $ n \in \N $ we set
\begin{linenomath}
\begin{equation*}
B_n := B_n(x_0) = B_n(0).
\end{equation*}
\end{linenomath}
Let $ \tilde{u}_n \in W^{1,p}_0(B_n) $ be given by Lemma \ref{schauder} applied with $ B = B_n $. Each $ \tilde{u}_n $ can be extended to a (not relabeled) function in $ \mathcal{D}^{1,p}_0(\R^N) $ by setting $ \tilde{u}_n \equiv 0 $ outside $ B_n $; so \eqref{estimate} reads as
\begin{equation}
\label{estimate2}
\|\tilde{u}_n\|_{\mathcal{D}^{1,p}_0(\R^N)} = \|\nabla \tilde{u}_n\|_{L^p(\R^N)} \leq C \quad \forall n \in \N.
\end{equation}
Reflexivity of $ \mathcal{D}^{1,p}_0(\R^N) $ and \eqref{estimate2} provide $ \tilde{u} \in \mathcal{D}^{1,p}_0(\R^N) $ such that, up to subsequences,
\begin{equation}
\label{deftildeu}
\tilde{u}_n \rightharpoonup \tilde{u} \quad \mbox{in} \;\; \mathcal{D}^{1,p}_0(\R^N).
\end{equation}
Consider, for any $ j \in \N $, the restriction operator $ \Psi_j: \mathcal{D}^{1,p}_0(\R^N) \to L^p(B_j) $ defined as
\begin{linenomath}
\begin{equation*}
\Psi_j(u) = u_{\mid_{B_j}}.
\end{equation*}
\end{linenomath}
This operator is linear. Now we show that it is also continuous. Indeed, the H\"older and Sobolev inequalities imply
\begin{equation}
\label{estimate3}
\begin{split}
\|u\|_{W^{1,p}(B_j)} &= \|u\|_{L^{p}(B_j)} + \|\nabla u\|_{L^p(B_j)} \\
&\leq c \|u\|_{L^{p^*}(B_j)} + \|\nabla u\|_{L^p(B_j)} = c \|u\|_{\mathcal{D}^{1,p}_0(\R^N)}
\end{split}
\end{equation}
with $ c = c(j,p,N) > 0 $ varying at each passage; this proves that $ \Psi_j $ is continuous from $ \mathcal{D}^{1,p}_0(\R^N) $ to $ W^{1,p}(B_j) $. By linearity (see \cite[Theorem 3.10]{B}) $ \Psi_j $ is weakly continuous, so \eqref{deftildeu} entails $ \Psi_j(\tilde{u}_n) \rightharpoonup \Psi_j(\tilde{u}) $ in $ W^{1,p}(B_j) $. Then Rellich-Kondrachov's theorem guarantees $ \Psi_j(\tilde{u}_n) \to \Psi_j(\tilde{u}) $ in $ L^p(B_j) $. Hence \cite[Theorem 4.2]{B} and a diagonal argument produce
\begin{equation}
\label{supersol}
\tilde{u}_n \to \tilde{u} \quad \mbox{a.e.\ in} \;\; \R^N.
\end{equation}
\begin{lemma}
\label{supersollemma}
The function $ \tilde{u} \in \mathcal{D}^{1,p}_0(\R^N) $ defined in \eqref{deftildeu} is a distributional solution to \eqref{superprob}.
\end{lemma}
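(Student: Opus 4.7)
The plan is to pass to the distributional limit in the equation satisfied by each $\tilde{u}_n$ by invoking Lemma \ref{gradconvlocal}. Fix $\phi \in C^\infty_c(\R^N)$ and pick $n_0 \in \N$ so large that $\supp \phi \Subset B_n$ for every $n \geq n_0$. Since $\tilde{u}_n \in W^{1,p}_0(B_n)$ is a fixed point of the operator $T$ from Lemma \ref{schauder}, it is a weak solution of \eqref{frozenprob} with $v = \tilde{u}_n$ on $B = B_n$, hence, after extension by zero,
\begin{equation*}
-\Div a(\nabla \tilde{u}_n) = f_n(x) := h(x) + k(x)|\nabla \tilde{u}_n|^r \quad \mbox{in} \;\; \mathscr{D}'(\R^N).
\end{equation*}

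I would first verify that $\{f_n\}$ is bounded in $L^{s'}_{\rm loc}(\R^N)$ and that $\{\tilde{u}_n\}$ is bounded in $W^{1,p}_{\rm loc}(\R^N)$, so that Lemma \ref{gradconvlocal} applies. Choose $s \in (1,p^*)$ satisfying \eqref{s}, which is possible by \eqref{hyphkr}. For any bounded domain $E \Subset \R^N$, H\"older's inequality (exactly as in \eqref{unifbound}) together with $h \in L^\eta(\R^N)$, $k \in L^\theta(\R^N)$, and the global estimate \eqref{estimate2} yields
\begin{equation*}
\|f_n\|_{L^{s'}(E)} \leq c \big(\|h\|_{L^\eta(\R^N)} + \|k\|_{L^\theta(\R^N)} \|\nabla \tilde{u}_n\|_{L^p(\R^N)}^r\big) \leq c'
\end{equation*}
for constants depending only on $|E|$, $p$, $r$, $s$, $\eta$, $\theta$, and $C$. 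The bound on $\{\tilde{u}_n\}$ in $W^{1,p}_{\rm loc}(\R^N)$ follows from \eqref{estimate2} and \eqref{estimate3}.

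Lemma \ref{gradconvlocal} then furnishes (up to subsequences) $u \in W^{1,p}_{\rm loc}(\R^N)$ and $f \in L^{s'}_{\rm loc}(\R^N)$ with $\tilde{u}_n \to u$ in $W^{1,p}(E)$ for every bounded $E \Subset \R^N$, $\nabla \tilde{u}_n \to \nabla u$ a.e.\ in $\R^N$, and
\begin{equation*}
-\Div a(\nabla u) = f(x) \quad \mbox{in} \;\; \mathscr{D}'(\R^N).
\end{equation*}
By \eqref{supersol} and uniqueness of the pointwise limit, $u = \tilde{u}$ a.e., so in particular $\tilde{u} \in W^{1,p}_{\rm loc}(\R^N)$ and $\nabla \tilde{u}_n \to \nabla \tilde{u}$ a.e.\ in $\R^N$.

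It remains to identify $f = h + k|\nabla \tilde{u}|^r$ a.e. The almost-everywhere convergence of $\nabla \tilde{u}_n$ yields $f_n \to h + k|\nabla \tilde{u}|^r$ a.e.\ in $\R^N$; combining this with the $L^{s'}_{\rm loc}$ bound above and Proposition \ref{Brezis}, I obtain $f_n \rightharpoonup h + k|\nabla \tilde{u}|^r$ in $L^{s'}(E)$ for every bounded $E \Subset \R^N$. Uniqueness of the weak limit then forces $f = h + k|\nabla \tilde{u}|^r$ a.e.\ in $\R^N$, which together with the previous display gives the distributional identity in \eqref{superprob}. Finally, $\tilde{u} \geq 0$ a.e.\ because each $\tilde{u}_n \geq 0$ by Lemma \ref{minty} and $\tilde{u}_n \to \tilde{u}$ a.e.

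The only delicate point is the uniform $L^{s'}_{\rm loc}$ bound on the convection term $k|\nabla \tilde{u}_n|^r$; once this is secured through \eqref{hyphkr} and the $B$-independent estimate \eqref{estimate2}, Lemma \ref{gradconvlocal} does all the remaining work by producing both the $W^{1,p}_{\rm loc}$ compactness and the a.e.\ convergence of gradients needed to identify the right-hand side.
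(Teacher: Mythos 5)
Your argument follows essentially the same route as the paper: pass to the limit in the frozen equations via Lemma \ref{gradconvlocal}, identify the weak limit of the right-hand side through the a.e.\ convergence of gradients and Proposition \ref{Brezis}, and deduce nonnegativity from \eqref{supersol}. The one inaccuracy is the claim that, after extension by zero, $-\Div a(\nabla \tilde{u}_n) = h + k|\nabla \tilde{u}_n|^r$ holds in $\mathscr{D}'(\R^N)$: the weak formulation on $B_n$ is only valid for test functions vanishing on $\partial B_n$, so testing with $\phi$ whose support crosses $\partial B_n$ picks up a nonzero conormal boundary term and the identity fails there. This is harmless — for each fixed bounded $E \Subset \R^N$ the equation does hold in $\mathscr{D}'(E)$ for all $n$ large enough, so you should simply apply Lemma \ref{gradconvlocal} with $\Omega = B_j$ and $n > j$ (as the paper does) rather than with $\Omega = \R^N$.
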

\begin{proof}
Take any $ \phi \in C^\infty_c(\R^N) $ and $ j \in \N $ such that $ \supp \phi \Subset B_j $. For each $ n > j $, Lemma \ref{schauder} and the definition of $ \tilde{u}_n $ ensure that
\begin{linenomath}
\begin{equation*}
-\Div a(\nabla \tilde{u}_n) = h+k|\nabla \tilde{u}_n|^r \quad \mbox{in} \;\; \mathscr{D}'(B_j).
\end{equation*}
\end{linenomath}
We take $ s \in (1,p^*) $ as in \eqref{s}, reason as for \eqref{unifbound}, and exploit \eqref{estimate2} to infer that $ \{h+k|\nabla \tilde{u}_n|^r\} $ is bounded in $L^{s'}(B_j) $. Hence, through \eqref{estimate2} and \eqref{estimate3} with $ u = \tilde{u}_n $, Lemma \ref{gradconvlocal} can be applied with $ \Omega = B_j $. In particular, for any domain $ E $ such that $ \supp \phi \Subset E \Subset B_j $ and for some $ \psi \in L^{s'}(E) $, we have
\begin{linenomath}
\begin{equation*}
\tilde{u}_n \to \tilde{u} \quad \mbox{in} \;\; W^{1,p}(E), \quad h+k|\nabla \tilde{u}_n|^r \rightharpoonup \psi \quad \mbox{in} \;\; L^{s'}(E).
\end{equation*}
\end{linenomath}
Another consequence of Lemma \ref{gradconvlocal} is the validity of \eqref{pointconvgrad}, that in our context reads as
\begin{linenomath}
\begin{equation*}
\nabla \tilde{u}_n \to \nabla \tilde{u} \quad \mbox{a.e.\ in} \;\; B_j.
\end{equation*}
\end{linenomath}
Thus Proposition \ref{Brezis} yields $ \psi = h+k|\nabla \tilde{u}|^r $. Equation \eqref{limitprob} holds with $ u = \tilde{u} $ and $ f = h+k|\nabla \tilde{u}|^r $; testing it with $ \phi $, besides recalling that $ \supp \phi \Subset B_j $, produces
\begin{linenomath}
\begin{equation*}
\int_{\R^N} a(\nabla \tilde{u}) \cdot \nabla \phi  = \int_{\R^N} (h+k|\nabla \tilde{u}|^r)\phi.
\end{equation*}
\end{linenomath}
Arbitrariness of $ \phi $ proves that $ \tilde{u} $ is a distributional solution to \eqref{superprob}. Since $ \tilde{u}_n \geq 0 $ and $ \tilde{u}_n \to \tilde{u} $ a.e.\ in $ \R^N $, according to Lemma \ref{minty} and \eqref{supersol} respectively, then $ \tilde{u} \geq 0 $ a.e.\ in $ \R^N $.
\end{proof}
Now suppose also \eqref{hypf}--\eqref{hypg} and fix any sequence $ \{\epsilon_n\} \subseteq (0,1) $ such that $ \epsilon_n \searrow 0 $. For all $ n \in \N $ consider the regularized problems
\begin{equation}
\label{ballprob}
\tag{$ {\rm P}_n $}
\left\{
\begin{alignedat}{2}
-\Div a(\nabla u) &= f(x,u+\epsilon_n) + g(x,\nabla u) \quad &&\mbox{in} \;\; B_n, \\
u &> 0 \quad &&\mbox{in} \;\; B_n, \\
u &= 0 \quad &&\mbox{on} \;\; \partial B_n.
\end{alignedat}
\right.
\end{equation}
If $ \tilde{u} \in W^{1,p}_{\rm loc}(\R^N) $, $ \tilde{u} \geq 0 $ a.e.\ in $ \R^N $, is a distributional solution to \eqref{superprob} (as, for instance, the one of Lemma \ref{supersollemma}), then the restriction of
\begin{equation}
\label{supersoldef}
\overline{u} := \tilde{u}+1 \in W^{1,p}_{\rm loc}(\R^N)
\end{equation}
to $ B_n $ is a super-solution to \eqref{ballprob}; indeed
\begin{equation}
\label{superboundbelow}
\overline{u} \geq 1 \quad \mbox{a.e.\ in} \;\; \R^N
\end{equation}
and, by \eqref{hypf}--\eqref{hypg} and a density argument,
\begin{linenomath}
\begin{equation*}
\begin{split}
&-\Div a(\nabla \overline{u}) = -\Div a(\nabla \tilde{u}) = h(x) + k(x)|\nabla \tilde{u}|^r \\
&\geq h(x)(\overline{u}+\epsilon_n)^{-\gamma} + k(x)|\nabla \overline{u}|^r \geq f(x,\overline{u}+\epsilon_n) + g(x,\nabla \overline{u})
\end{split}
\end{equation*}
\end{linenomath}
in weak sense (i.e., in $ W^{-1,p'}(B_n) $).

\subsection{Sub-solutions for regularized problems}

In this sub-section we construct, for any $ n \in \N $, a regular sub-solution $ \underline{u}_n $ to \eqref{ballprob}, and then solve \eqref{ballprob} via Theorem \ref{subsuperthm}.
\begin{lemma}
\label{subflemma}
Under \eqref{hypf}, there exists a continuous function $ \underline{f}: \R^N \times \R \to [0,+\infty) $ with the following properties:
\begin{itemize}
\item[$ {\rm (i)} $]\makebox[\textwidth][c]{$ \underline{f} \equiv \beta $ in $ B_\rho \times (-\epsilon,\epsilon) $ for some $ \beta, \rho, \epsilon > 0 $,}
\item[$ {\rm (ii)} $]\makebox[\textwidth][c]{$ \underline{f}(x,\cdot) $ is non-increasing in $ (0,+\infty) $ for all $ x \in \R^N $,}
\item[$ {\rm (iii)} $]\makebox[\textwidth][c]{$ \underline{f}(x,s) \leq f(x,s) $ for a.a.\ $ (x,s) \in \R^N \times (0,+\infty) $.}
\end{itemize}
\end{lemma}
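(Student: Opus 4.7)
The plan is an explicit construction based on the uniform positivity assumption at $ s \to 0^+ $. From the first line of \eqref{hypf}, applied with $ x_0 = 0 $, there exist $ \epsilon_0 > 0 $ and $ \beta_0 > 0 $ such that $ f(x,s) \geq \beta_0 $ for a.a.\ $ x \in B_\sigma $ and all $ s \in (0,\epsilon_0] $. I would pick $ \rho \in (0,\sigma) $, $ \epsilon \in (0,\epsilon_0/2) $, and $ \beta \in (0,\beta_0] $, and then separate the $ x $-localization from the $ s $-localization.

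More precisely, I would first take $ \omega \in C^\infty_c(\R^N) $ with $ 0 \leq \omega \leq 1 $, $ \omega \equiv 1 $ on $ B_\rho $, and $ \supp \omega \Subset B_\sigma $; then choose $ \psi \in C(\R;[0,1]) $ non-increasing with $ \psi \equiv 1 $ on $ (-\infty,\epsilon] $ and $ \psi \equiv 0 $ on $ [2\epsilon,+\infty) $ (for instance a piecewise linear function). Finally, I would set
\begin{equation*}
\underline{f}(x,s) := \beta\, \omega(x)\, \psi(s), \qquad (x,s) \in \R^N \times \R.
\end{equation*}
By construction $ \underline{f} $ is continuous and non-negative; on $ B_\rho \times (-\epsilon,\epsilon) $ both $ \omega(x) $ and $ \psi(s) $ equal $ 1 $, yielding property $ {\rm (i)} $. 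Property $ {\rm (ii)} $ follows immediately since $ \omega(x) \geq 0 $ is independent of $ s $ and $ \psi $ is non-increasing on $ \R $ (hence on $ (0,+\infty) $).

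For property $ {\rm (iii)} $, I would split on the support of $ \underline{f} $. If $ x \notin B_\sigma $ then $ \omega(x) = 0 $ and the inequality is trivial. If $ x \in B_\sigma $, then for $ s \in (0,2\epsilon] \subseteq (0,\epsilon_0] $ I have $ \underline{f}(x,s) \leq \beta \leq \beta_0 \leq f(x,s) $ for a.a.\ such $ x $; while for $ s > 2\epsilon $ the factor $ \psi(s) $ vanishes, so $ \underline{f}(x,s) = 0 \leq f(x,s) $.

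There is no real obstacle here: the only subtle point is to make sure the $ s $-cutoff $ \psi $ has its entire support inside the interval $ (0,\epsilon_0] $ on which the uniform lower bound from \eqref{hypf} is available, which is arranged by taking $ 2\epsilon \leq \epsilon_0 $, and to separate the $ x $- and $ s $-dependencies so that monotonicity in $ s $ is preserved for every fixed $ x \in \R^N $ (not only on $ B_\rho $).
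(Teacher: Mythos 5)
Your proposal is correct and takes essentially the same approach as the paper, which also defines $ \underline{f}(x,s) = \beta\, \Theta\left(\tfrac{|x|}{\sigma}\right) \Theta\left(\tfrac{|s|}{\alpha}\right) $ as the product of the lower bound $ \beta $ from \eqref{hypf} with cutoffs in $ x $ and $ s $ supported where that bound is available. The only cosmetic difference is that the paper uses a single even cutoff composed with $ |x| $ and $ |s| $, whereas you use two separate cutoffs with $ \psi $ monotone on all of $ \R $; both verify $ {\rm (i)} $--$ {\rm (iii)} $ in the same way.
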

\begin{proof}
From \eqref{hypf} we deduce that there exist $ \alpha,\beta > 0 $ such that
\begin{equation}
\label{liminf}
f(x,s) > \beta \quad \mbox{for a.a.} \;\; (x,s) \in B_\sigma \times (0,\alpha).
\end{equation}
Pick any $ \Theta \in C^\infty_c([0,+\infty)) $ such that
\begin{equation}
\label{cutoff}
\Theta(t) = \left\{
\begin{array}{ll}
1 \quad &\mbox{in} \;\; \left[0,\frac{1}{2}\right], \\
\mbox{decreasing} \quad &\mbox{in} \;\; \left(\frac{1}{2},1\right), \\
0 \quad &\mbox{in} \;\; [1,+\infty).
\end{array}
\right.
\end{equation}
According to \eqref{liminf}--\eqref{cutoff}, the function
\begin{linenomath}
\begin{equation*}
\underline{f}(x,s) = \beta \Theta \left( \frac{|x|}{\sigma} \right) \Theta \left( \frac{|s|}{\alpha} \right)
\end{equation*}
\end{linenomath}
satisfies the required conditions with $ \rho = \frac{\sigma}{2} $ and $ \epsilon = \frac{\alpha}{2} $: indeed, concerning (iii), notice that $ 0 \leq \underline{f} \leq \beta $ in $ \R^N \times \R $ and $ \underline{f}(x,s) = 0 $ if either $ x \notin B_\sigma $ or $ |s| > \alpha $.
\end{proof}
\begin{lemma}
\label{subsollemma}
Suppose $ {\rm (H_a)} $ and \eqref{hypf}. Then, for any $ n \in \N $, problem \eqref{ballprob} admits a sub-solution $ \underline{u}_n \in C^{1,\tau}(\overline{B}_n) $, for some $ \tau \in (0,1] $. Moreover, $ \underline{u}_n $ satisfies
\begin{equation}
\label{bound}
0 < \underline{u}_n \leq 1 \quad \mbox{in} \;\; B_n, \quad \underline{u}_n = 0 \quad \mbox{on} \;\; \partial B_n.
\end{equation}
\end{lemma}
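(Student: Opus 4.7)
The plan is to build $\underline{u}_n$ as a weak solution of the auxiliary Dirichlet problem
\[
-\Div a(\nabla u) = \underline{f}(x,u) \quad \text{in } B_n, \qquad u = 0 \quad \text{on } \partial B_n,
\]
with $\underline{f}$ furnished by Lemma \ref{subflemma}, and then to read off the required properties of $\underline{u}_n$ from the careful structure of $\underline{f}$ together with the regularity/strong maximum principle package recorded in Remark \ref{standardrmk}. As a preparatory step, it is convenient to shrink $\alpha$ in the proof of Lemma \ref{subflemma} so that $\alpha \leq 1$ and $\alpha + \epsilon_n \leq \alpha_0$ for every $n$, where $\alpha_0$ denotes the original threshold coming from the liminf condition in \eqref{hypf}; this is harmless since the sequence $\{\epsilon_n\}$ is still at our disposal.

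First, I would obtain $\underline{u}_n$ by direct minimization of the functional
\[
J_n(u) := \int_{B_n} A(|\nabla u|)\,dx - \int_{B_n} F(x,u)\,dx, \qquad u \in W^{1,p}_0(B_n),
\]
where $A(t) := \int_0^t s\,a_0(s)\,ds$ and $F(x,s) := \int_0^s \underline{f}(x,r)\,dr$. Coercivity follows from $A(t) \geq mt^p/p$ (a consequence of $(a_2)$) together with $|F(x,s)| \leq \beta|s|$ and Poincaré's inequality; weak lower semicontinuity is standard, the first piece being convex in $\nabla u$ while the second is weakly continuous on $W^{1,p}_0(B_n)$ via Rellich--Kondrachov. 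The minimizer solves the Euler--Lagrange equation $-\Div a(\nabla \underline{u}_n) = \underline{f}(x,\underline{u}_n)$ in $W^{-1,p'}(B_n)$, and replacing $u$ by $u_+$ can only decrease $J_n$ (monotonicity of $A$ together with the sign of $F$), so one may take $\underline{u}_n \geq 0$.

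Next, I would obtain the two-sided bound and the regularity by two separate tools. Testing the Euler--Lagrange equation with $\phi := (\underline{u}_n - \alpha)_+ \in W^{1,p}_0(B_n)$ and using that $\underline{f}(x,s) \equiv 0$ for $s \geq \alpha$ gives, via $(a_2)$, $\nabla (\underline{u}_n - \alpha)_+ \equiv 0$ a.e., hence $\underline{u}_n \leq \alpha \leq 1$ a.e.\ in $B_n$. Since $\underline{f}(\cdot,\underline{u}_n) \in L^\infty(B_n)$ (with bound $\beta$), Remark \ref{standardrmk} upgrades $\underline{u}_n$ to $C^{1,\tau}(\overline{B}_n)$ and, noting that $\underline{u}_n \not\equiv 0$ (otherwise the Euler--Lagrange equation would force $\underline{f}(\cdot,0) \equiv 0$ in distributional sense, contradicting $\underline{f}(x,0) = \beta$ on $B_\rho$), Pucci--Serrin's strong maximum principle yields $\underline{u}_n > 0$ in $B_n$.

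Finally, I would verify that $\underline{u}_n$ is a sub-solution of \eqref{ballprob}. Since $g \geq 0$, it suffices to check $\underline{f}(x,\underline{u}_n) \leq f(x,\underline{u}_n+\epsilon_n)$ pointwise a.e. This is trivial where $\underline{f}(x,\underline{u}_n(x)) = 0$. Elsewhere one has $x \in B_\sigma$ and $\underline{u}_n(x) \leq \alpha$, so the preparatory choice gives $\underline{u}_n(x) + \epsilon_n \leq \alpha + \epsilon_n \leq \alpha_0$, and \eqref{hypf} delivers $f(x,\underline{u}_n(x)+\epsilon_n) > \beta \geq \underline{f}(x,\underline{u}_n(x))$. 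The delicate point of the argument is precisely this matching between the support of $\underline{f}$ and the interval on which $f$ enjoys a positive lower bound, which is why the preparatory tuning of $\alpha$ (and of $\epsilon_n$ if needed) is essential; everything else is a routine consequence of the structure hypotheses on $a$ and of the explicit form of $\underline{f}$.
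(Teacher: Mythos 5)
Your argument is correct, and it reaches the conclusion by a genuinely different route than the paper. The paper does not solve $-\Div a(\nabla u)=\underline{f}(x,u)$ directly: it introduces the family of truncations $f_\delta(x,s)=\min\{\underline{f}(x,s+\epsilon_1),\delta\}$ (see \eqref{truncf}), minimizes for each $\delta$, and obtains the bound $\underline{u}_n\leq 1$ only indirectly, by showing via uniform $C^{1,\tau}$ estimates and Ascoli--Arzel\`a that $\underline{u}_{n,\delta}\to 0$ in $C^1(\overline{B}_n)$ as $\delta\to 0^+$ and then fixing $\delta_n$ small (cf.\ \eqref{vanishing}). You instead exploit the compact support of $\underline{f}(x,\cdot)$ and test with $(\underline{u}_n-\alpha)_+$ to get $\underline{u}_n\leq\alpha\leq 1$ outright, which eliminates the whole $\delta$-family and the limiting step; the non-triviality, regularity and positivity arguments are then the same as the paper's (Remark \ref{standardrmk}). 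A second difference is in the sub-solution verification: the paper shifts the argument of $\underline{f}$ by $\epsilon_1$ inside the truncation precisely so that the monotonicity chain $f_{\delta_n}(x,s)\leq\underline{f}(x,s+\epsilon_1)\leq\underline{f}(x,s+\epsilon_n)\leq f(x,s+\epsilon_n)$ applies, whereas you verify $\underline{f}(x,\underline{u}_n)\leq f(x,\underline{u}_n+\epsilon_n)$ through the quantitative dichotomy ``either $\underline{f}=0$, or $x\in B_\sigma$, $\underline{u}_n<\alpha$ and $f>\beta\geq\underline{f}$,'' after tuning $\alpha$ (and $\epsilon_1$) so that $\alpha+\epsilon_1\leq\alpha_0$ --- this tuning is indeed the delicate point and you handle it correctly. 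The only caveat worth recording is external to the lemma: the paper's $\epsilon_1$-shifted construction and the monotonicity of $\underline{f}$ are reused verbatim in the comparison argument \eqref{comparison} of Lemma \ref{energyest}, so adopting your $\underline{u}_n$ would require rewriting that step; it does go through, by the same dichotomy you use here (on $\{\underline{u}_{j+1}>u_n\}$ one has $u_n<\underline{u}_{j+1}<\alpha$, hence $f(x,u_n+\epsilon_n)>\beta\geq\underline{f}(x,\underline{u}_{j+1})$ wherever $\underline{f}>0$), but it is not automatic.
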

\begin{proof}
Fix $ n \in \N $. For any $ \delta \in (0,1) $ consider the problem
\begin{equation}
\label{subprob}
\tag{$ {\rm \underline{P}}_{n,\delta} $}
\left\{
\begin{alignedat}{2}
-\Div a(\nabla u) &= f_\delta(x,u) \quad &&\mbox{in} \;\; B_n, \\
u &> 0 \quad &&\mbox{in} \;\; B_n, \\
u &= 0 \quad &&\mbox{on} \;\; \partial B_n,
\end{alignedat}
\right.
\end{equation}
where
\begin{equation}
\label{truncf}
f_\delta(x,s) := \min\{\underline{f}(x,s+\epsilon_1),\delta\},
\end{equation}
being $ \underline{f} $ as in Lemma \ref{subflemma}. Observe that
\begin{equation}
\label{apriori}
|f_\delta(x,s)| \leq \delta \leq 1 \quad \mbox{for a.a.} \;\; (x,s) \in \R^N \times \R, \quad \forall \delta \in (0,1).
\end{equation}
Since $ \epsilon_n \searrow 0 $, it is not restrictive to suppose $ \epsilon_1 < \epsilon $, with $ \epsilon $ stemming from Lemma \ref{subflemma}; hence Lemma \ref{subflemma} $ {\rm (i)} $ implies $ \underline{f}(x,\epsilon_1) = \beta > 0 $ for all $ x \in B_\rho $, whence
\begin{equation}
\label{nontrivial}
f_\delta(\cdot,0) \not\equiv 0 \quad \mbox{in} \;\; B_n.
\end{equation}
The energy functional $ J_{n,\delta}: W^{1,p}_0(B_n) \to \R $ associated to \eqref{subprob} is
\begin{linenomath}
\begin{equation*}
J_{n,\delta}(u) := \int_{B_n} A(\nabla u) \, {\rm d}x - \int_{B_n} F_\delta(x,u) \, {\rm d}x,
\end{equation*}
\end{linenomath}
being
\begin{linenomath}
\begin{equation*}
A(\xi) := \int_0^{|\xi|} ta_0(t) \, {\rm d}t, \quad F_\delta(x,s) := \int_0^{s} f_\delta(x,t) \, {\rm d}t.
\end{equation*}
\end{linenomath}
According to $ {\rm (a_1)} $--$ {\rm (a_2)} $ and \eqref{apriori}, standard arguments of Calculus of Variations guarantee that $ J_{n,\delta} $ is of class $ C^1 $, weakly sequentially lower semi-continuous, and coercive. Hence Weierstrass-Tonelli's theorem, together with \eqref{nontrivial}, furnishes $ \underline{u}_{n,\delta} \in W^{1,p}_0(B_n) $ non-trivial solution to \eqref{subprob}.

Lieberman's regularity (vide Remark \ref{standardrmk}), jointly with \eqref{apriori}, ensures that $ \{\underline{u}_{n,\delta}: \, \delta \in (0,1)\} $ is uniformly bounded in $ C^{1,\tau}(\overline{B}_n) $. Thus, Ascoli-Arzelà's theorem produces $ \underline{u}_{n,0} \in C^1(\overline{B}_n) $ such that $ \underline{u}_{n,\delta} \to \underline{u}_{n,0} $ in $ C^1(\overline{B}_n) $ as $ \delta \to 0^+ $. Passing to the limit in the weak formulation of \eqref{subprob} reveals that $ \underline{u}_{n,0} \equiv 0 $ in $ B_n $. Hence
\begin{equation}
\label{vanishing}
\lim_{\delta \to 0^+} \underline{u}_{n,\delta} = 0 \quad \mbox{in} \;\; C^1(\overline{B}_n).
\end{equation}
By means of \eqref{vanishing}, we can choose $ \delta_n > 0 $ such that
\begin{linenomath}
\begin{equation*}
|\underline{u}_{n,\delta_n}(x)| \leq 1 \quad \forall x \in B_n.
\end{equation*}
\end{linenomath}
Since $ \underline{u}_{n,\delta_n} $ is non-trivial, the strong maximum principle (see Remark \ref{standardrmk}) yields $ \underline{u}_{n,\delta_n} > 0 $ in $ B_n $. So \eqref{bound} holds true for
\begin{equation}
\label{subsoldef}
\underline{u}_n := \underline{u}_{n,\delta_n}.
\end{equation}
Using \eqref{truncf} and $ {\rm (ii)} $--$ {\rm (iii)} $ of Lemma \ref{subflemma}, besides $ \epsilon_n \searrow 0 $, leads to
\begin{linenomath}
\begin{equation*}
\begin{split}
-\Div a(\nabla \underline{u}_n) &= f_{\delta_n}(x,\underline{u}_n) \leq \underline{f}(x,\underline{u}_n+\epsilon_1) \leq \underline{f}(x,\underline{u}_n+\epsilon_n) \\
&\leq f(x,\underline{u}_n+\epsilon_n) \leq f(x,\underline{u}_n+\epsilon_n) + g(x,\nabla \underline{u}_n)
\end{split}
\end{equation*}
\end{linenomath}
in weak sense, proving that $ \underline{u}_n $ is a sub-solution to \eqref{ballprob}.
\end{proof}
\begin{lemma}
\label{subsuper}
Under $ {\rm (H_a)} $, \eqref{hypf}--\eqref{hypg}, and \eqref{hyphkr}, for any $ n \in \N $ problem \eqref{ballprob} admits a weak solution $ u_n \in W^{1,p}_0(B_n) $ satisfying $ \underline{u}_n \leq u_n \leq \overline{u} $, being $ \underline{u}_n $ and $ \overline{u} $ as in Lemma \ref{subsollemma} and \eqref{supersoldef} respectively.
\end{lemma}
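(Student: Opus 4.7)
The plan is to invoke Theorem \ref{subsuperthm} on \eqref{ballprob} with sub-solution $\underline{u}_n$ and super-solution $\overline{u}|_{B_n}$, so the entire proof reduces to verifying the hypotheses of that theorem. That $\underline{u}_n$ is a sub-solution of \eqref{ballprob} was established at the end of Lemma \ref{subsollemma}, while the paragraph containing \eqref{supersoldef} already observed that $\overline{u}|_{B_n}$ is a super-solution of \eqref{ballprob}. The boundary conditions $\underline{u}_n = 0$ and $\overline{u} \geq 1 \geq 0$ on $\partial B_n$ follow from \eqref{bound} and \eqref{superboundbelow}, as does the pointwise ordering $\underline{u}_n \leq 1 \leq \overline{u}$ a.e.\ in $B_n$.

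The one step that needs a brief computation is the local growth condition \eqref{localgrowth} for the reaction $(x,s,\xi) \mapsto f(x,s+\epsilon_n) + g(x,\xi)$. Since $\underline{u}_n \geq 0$, any $s \in [\underline{u}_n(x),\overline{u}(x)]$ satisfies $s + \epsilon_n \geq \epsilon_n$, so \eqref{hypf}--\eqref{hypg} give
\begin{linenomath}
\begin{equation*}
0 \leq f(x,s+\epsilon_n) + g(x,\xi) \leq h(x)(s+\epsilon_n)^{-\gamma} + k(x)|\xi|^r \leq \epsilon_n^{-\gamma} h(x) + k(x)|\xi|^r,
\end{equation*}
\end{linenomath}
so one may take $\tilde{h} := \epsilon_n^{-\gamma} h \in L^\eta(B_n)$ and $\tilde{k} := k \in L^\theta(B_n)$ in \eqref{localgrowth}; the exponent condition \eqref{hyphkr} is part of the standing hypotheses. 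Theorem \ref{subsuperthm} then furnishes a weak solution $u_n \in W^{1,p}_0(B_n)$ to \eqref{ballprob} with $\underline{u}_n \leq u_n \leq \overline{u}$ a.e.\ in $B_n$, which is exactly the claim.

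There is no real obstacle here: all the nontrivial work is already in Lemmas \ref{supersollemma} and \ref{subsollemma} and in Theorem \ref{subsuperthm}, and this lemma is just the packaging step in which they are combined. The only minor point worth checking is that the restriction $\overline{u}|_{B_n}$ actually lies in $W^{1,p}(B_n)$ (rather than only in $W^{1,p}_{\rm loc}$); this follows from the boundedness of $B_n$, the decomposition $\overline{u} = \tilde{u} + 1$ from \eqref{supersoldef}, and the continuous restriction $\mathcal{D}^{1,p}_0(\R^N) \hookrightarrow W^{1,p}(B_n)$ already used in \eqref{estimate3}.
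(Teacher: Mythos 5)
Your proposal is correct and follows essentially the same route as the paper: order the sub- and super-solutions via \eqref{bound} and \eqref{superboundbelow}, bound the reaction by $\epsilon_n^{-\gamma}h(x)+k(x)|\xi|^r$ to verify \eqref{localgrowth}, and apply Theorem \ref{subsuperthm}. The extra check that $\overline{u}|_{B_n}\in W^{1,p}(B_n)$ is a harmless addition.
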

\begin{proof}
Fix any $ n \in \N $. Observe that \eqref{bound} and \eqref{superboundbelow} yield $ \underline{u}_n \leq \overline{u} $ in $ B_n $, while \eqref{hypf}--\eqref{hypg} entail
\begin{linenomath}
\begin{equation*}
0 \leq f(x,s+\epsilon_n) + g(x,\xi) \leq h(x)\epsilon_n^{-\gamma} + k(x)|\xi|^r
\end{equation*}
\end{linenomath}
for a.a.\ $ x \in \Omega $ and all $ (s,\xi) \in [0,+\infty) \times \R^N $. The conclusion thus follows by applying Theorem \ref{subsuperthm}, after recalling \eqref{hyphkr} and \eqref{bound}.
\end{proof}

\subsection{Generalized solution to problem \eqref{prob}}

Hereafter we consider each $ u_n $ given by Lemma \ref{subsuper} as extended to the whole $ \R^N $ by setting $ u_n \equiv 0 $ outside $ B_n $; hence we can suppose $ u_n \in \mathcal{D}^{1,p}_0(\R^N) $. An analogous comment can be made for any $ \underline{u}_n $ arising from Lemma \ref{subsollemma}.

\begin{lemma}
\label{energyest}
Assume $ {\rm (H_a)} $, \eqref{hypf}--\eqref{hypg}, and \eqref{hyphkr}. Then, for any $ j \in \N $, there exists $ C_j > 0 $ such that
\begin{equation}
\label{mainenergyest}
\|u_n\|_{W^{1,p}(B_j)} \leq C_j \quad \forall n > j.
\end{equation}
\end{lemma}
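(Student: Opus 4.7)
The approach I would take exploits the pointwise bound $u_n \leq \overline{u}$ from Lemma \ref{subsuper} to test $(P_n)$ with a cleverly chosen non-negative function that sidesteps the delicate singular and convective terms entirely. Fix $j \in \N$ and pick $\eta \in C^\infty_c(\R^N)$ with $\eta \equiv 1$ on $B_j$, $\supp \eta \Subset B_{j+1}$, and $|\nabla \eta| \leq c_j$. For $n > j+1$, the function
\[
\phi := \eta^p(\overline{u}-u_n)
\]
is non-negative (since $\overline{u} \geq u_n$), compactly supported in $B_{j+1} \Subset B_n$, and belongs to $W^{1,p}_0(B_n)$ because $\overline{u} \in W^{1,p}_{\rm loc}(\R^N)$ and $u_n \in W^{1,p}_0(B_n)$.

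The plan is to use $\phi$ in the weak formulation of $(P_n)$. Since $f, g \geq 0$ and $\phi \geq 0$, the right-hand side $\int[f(x,u_n+\epsilon_n) + g(x,\nabla u_n)]\phi$ is non-negative and can simply be dropped, turning the equality into
\[
\int \eta^p a(\nabla u_n) \cdot \nabla u_n \leq \int \eta^p a(\nabla u_n) \cdot \nabla \overline{u} + p\int (\overline{u}-u_n)\eta^{p-1} a(\nabla u_n) \cdot \nabla \eta.
\]
This is the key move: both the strongly singular reaction and the convective gradient term disappear, leaving a purely ``linear'' estimate in $\nabla u_n$.

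The remaining steps are routine. Bound the left-hand side from below by $m \int \eta^p |\nabla u_n|^p$ via $(\mathrm{a}_2)$. For each right-hand side term, use $(\mathrm{a}_1)$ to dominate $|a(\nabla u_n)|$ by $M(|\nabla u_n|^{p-1}+1)$, then apply Young's inequality with conjugate exponents $p',p$ to absorb a small multiple $\delta \int \eta^p |\nabla u_n|^p$ into the left-hand side (choosing $\delta = m/4$). The residual quantities are controlled by $\int_{B_{j+1}} \overline{u}^p |\nabla \eta|^p$, $\int_{B_{j+1}} \eta^p |\nabla \overline{u}|^p$, and constants depending on $|B_{j+1}|$, using also $(\overline{u}-u_n)^p \leq \overline{u}^p$. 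All of these are finite and independent of $n$, because $\overline{u} \in W^{1,p}_{\rm loc}(\R^N)$. The conclusion
\[
\int_{B_j} |\nabla u_n|^p \leq \int \eta^p |\nabla u_n|^p \leq C_j
\]
follows; combined with $\|u_n\|_{L^p(B_j)} \leq \|\overline{u}\|_{L^p(B_j)} < \infty$, we get \eqref{mainenergyest}.

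The main obstacle, conceptually, is identifying the right test function: the naive choice $\eta^p u_n$ forces one to confront both the strong singularity of $f$ (where one would need a delicate super-level set split and a uniform lower bound on $u_n$) and the convective term (requiring a careful Young inequality with exponents tuned via \eqref{hyphkr}). Switching to $\phi = \eta^p(\overline{u}-u_n)$ converts both dangers into a non-negative quantity one can discard, making the independence of $\overline{u}$ from $n$ (guaranteed in the construction \eqref{supersoldef}) the crucial and sufficient ingredient.
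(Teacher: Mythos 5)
Your argument is correct: $\phi=\eta^p(\overline{u}-u_n)$ is non-negative by Lemma \ref{subsuper}, lies in $W^{1,p}_0(B_n)$ for $n\geq j+1$, and is an admissible test function since the right-hand side of \eqref{ballprob} belongs to $L^{s'}(B_n)$ with $s<p^*$; dropping that non-negative term and running a Caccioppoli-type estimate with $ {\rm (a_1)} $, $ {\rm (a_2)} $ and Young's inequality does yield a bound depending only on $\|\overline{u}\|_{W^{1,p}(B_{j+1})}$ and $|B_{j+1}|$, hence independent of $n$. This is, however, a genuinely different route from the paper's. The paper first proves, by comparison with the sub-solution $\underline{u}_{j+1}$, the uniform lower bound $u_n\geq\omega_j>0$ on $B_j$ (estimate \eqref{comparison}), then tests with $(u_n-\omega_j)_+\psi$; the singular term is controlled by $\omega_j^{1-\gamma}\|h\|_{L^1(\R^N)}$ (this is where $\gamma\geq 1$ enters, cf.\ Remark \ref{variational}), the convective term by a Young inequality tuned via \eqref{hyphkr}, and a leftover gradient integral on a larger ball forces an appeal to Giaquinta's iteration lemma. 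Your choice of test function bypasses all three of these devices: no lower bound on $u_n$, no use of $\gamma\geq 1$, no hole-filling. What the paper's longer proof buys is precisely \eqref{comparison}, which is not part of the statement of Lemma \ref{energyest} but is indispensable downstream: it gives condition (i) of Definition \ref{gensol} and the $L^{s'}$ bound \eqref{brezisest} on the singular term in the proof of Theorem \ref{mainthm}. So if your proof were adopted, the comparison claim $u_n\geq\underline{u}_{j+1}\geq\omega_j$ on $B_j$ would still have to be established separately (its proof is independent of the energy estimate, so this is only a matter of reorganization). Two cosmetic points: take $n>j$ rather than $n>j+1$ (the case $n=j+1$ works identically since $\supp\eta\Subset B_{j+1}=B_n$), and note that the constant term $M\eta^p|\nabla\overline{u}|$ and $pM\eta^{p-1}\overline{u}|\nabla\eta|$ contributions are integrable on $B_{j+1}$ because $\overline{u}\in W^{1,p}_{\rm loc}(\R^N)$, exactly as you say.
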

\begin{proof}
Fix any $ j \in \N $. According to Lemma \ref{subsuper} we get
\begin{equation}
\label{potential}
\|u_n\|_{L^p(B_j)}^p \leq \|\overline{u}\|_{L^p(B_j)}^p.
\end{equation}

\noindent
\textit{\underline{Claim}: there exists $ \omega_j > 0 $ such that $ u_n \geq \omega_j $ a.e.\ in $ B_j $ for all $ n > j $.} \\
Pick any $ n > j $. Define $ \phi_n := (\underline{u}_{j+1}-u_n)_+ \in W^{1,p}_0(B_{j+1}) $ and extend it to a function of $ W^{1,p}_0(B_n) $ by setting $ \phi_n \equiv 0 $ in $ B_n \setminus B_{j+1} $. In this way $ \phi_n $ is an admissible test function for both $ ({\rm \underline{P}}_{j+1,\delta_{j+1}}) $ and \eqref{ballprob}; exploiting \eqref{subsoldef}, \eqref{truncf}, and $ {\rm (ii)} $--$ {\rm (iii)} $ of Lemma \ref{subflemma} we obtain
\begin{linenomath}
\begin{equation*}
\begin{split}
\int_{B_{j+1}} a(\nabla \underline{u}_{j+1}) \cdot \nabla \phi_n &\leq \int_{B_{j+1}} f_{\delta_{j+1}}(x,\underline{u}_{j+1}) \phi_n \leq \int_{B_{j+1}} \underline{f}(x,\underline{u}_{j+1}+\epsilon_1) \phi_n \\
&\leq \int_{B_{j+1}} \underline{f}(x,u_n+\epsilon_n) \phi_n \leq \int_{B_{j+1}} f(x,u_n+\epsilon_n) \phi_n \\
&\leq \int_{B_{j+1}} [f(x,u_n+\epsilon_n)+g(x,\nabla u_n)] \phi_n = \int_{B_{j+1}} a(\nabla u_n) \cdot \nabla \phi_n.
\end{split}
\end{equation*}
\end{linenomath}
Rearranging the terms gives
\begin{linenomath}
\begin{equation*}
\int_{B_{j+1} \cap \{\underline{u}_{j+1} > u_n\}} (a(\nabla \underline{u}_{j+1})-a(\nabla u_n)) \cdot (\nabla \underline{u}_{j+1} - \nabla u_n) \leq 0.
\end{equation*}
\end{linenomath}
Thus $ {\rm (a_3)} $ ensures $ \nabla \underline{u}_{j+1} = \nabla u_n $ a.e.\ in $ B_{j+1} \cap \{\underline{u}_{j+1} > u_n\} $, whence
\begin{equation}
\label{weakcomp}
\nabla \phi_n = 0 \quad \mbox{a.e.\ in} \;\; B_{j+1}.
\end{equation}
Recalling that $ \phi_n \in W^{1,p}_0(B_{j+1}) $, \eqref{weakcomp} yields $ \phi_n = 0 $ a.e.\ in $ B_{j+1} $, which means $ u_n \geq \underline{u}_{j+1} $ a.e.\ in $ B_{j+1} $. Since $ \underline{u}_{j+1} $ is continuous and strictly positive in $ B_j $ (cf.\ \eqref{bound}), we deduce that
\begin{equation}
\label{comparison}
u_n \geq \underline{u}_{j+1} \geq \min_{B_j} \underline{u}_{j+1} =: \omega_j > 0 \quad \mbox{a.e.\ in} \;\; B_j \quad \forall n > j,
\end{equation}
so the claim is proved. \\
Take any $ j \leq s < t \leq j+1 \leq n $, and then fix a cut-off function $ \psi \in C^\infty_c(\R^N) $ such that
\begin{equation}
\label{psiprops}
\begin{alignedat}{2}
&\psi \equiv 1 \quad \mbox{in} \;\; B_s, \quad &&\psi \equiv 0 \quad \mbox{in} \;\; \R^N \setminus B_t, \\
&\psi \geq 0 \quad \mbox{in} \;\; \R^N, \quad &&|\nabla \psi| \leq \frac{c_N}{t-s} \quad \mbox{in} \;\; \R^N.
\end{alignedat}
\end{equation}
Since $ (u_n-\omega_j)_+ \psi \in W^{1,p}_0(B_n) $, we can test \eqref{ballprob} with it; exploiting also $ {\rm (a_2)} $ we have
\begin{linenomath}
\begin{equation*}
\begin{split}
&m\int_{B_{j+1} \cap \{u_n>\omega_j\}} |\nabla u_n|^p \psi \\
&\leq \int_{B_{j+1} \cap \{u_n>\omega_j\}} \psi a(\nabla u_n) \cdot \nabla u_n = \int_{B_{j+1}} \psi a(\nabla u_n) \cdot \nabla(u_n-\omega_j)_+ \\
&= \int_{B_{j+1}} [f(x,u_n+\epsilon_n)+g(x,\nabla u_n)] (u_n-\omega_j)_+ \psi - \int_{B_{j+1}} (u_n-\omega_j)_+ a(\nabla u_n) \cdot \nabla \psi.
\end{split}
\end{equation*}
\end{linenomath}
Using \eqref{hypf}--\eqref{hypg}, $ {\rm (a_1)} $, and \eqref{psiprops} we arrive at
\begin{linenomath}
\begin{equation*}
\begin{split}
&m\int_{B_{j+1} \cap \{u_n>\omega_j\}} |\nabla u_n|^p \psi \\
&\leq \int_{B_{j+1} \cap \{u_n>\omega_j\}} h(u_n+\epsilon_n)^{-\gamma} u_n\psi + \int_{B_{j+1} \cap \{u_n>\omega_j\}} k|\nabla u_n|^r u_n \psi \\
&\quad + M \int_{B_t \cap \{u_n>\omega_j\}} (|\nabla u_n|^{p-1}+1) u_n |\nabla \psi| \\
&\leq \int_{B_{j+1} \cap \{u_n>\omega_j\}} hu_n^{1-\gamma} \psi + \int_{B_{j+1} \cap \{u_n>\omega_j\}} |\nabla u_n|^r\psi^{\frac{r}{p}} k u_n  \psi^{1-\frac{r}{p}} \\
&\quad + M \int_{B_t \cap \{u_n>\omega_j\}} (|\nabla u_n|^{p-1}+1) u_n |\nabla \psi|.
\end{split}
\end{equation*}
\end{linenomath}
Then, by \eqref{hyphkr} and \eqref{psiprops}, besides the Young, H\"older, and Sobolev inequalities, we get
\begin{linenomath}
\begin{equation*}
\begin{split}
&m\int_{B_{j+1} \cap \{u_n>\omega_j\}} |\nabla u_n|^p \psi \\
&\leq \omega_j^{1-\gamma} \|h\|_{L^1(\R^N)} + \frac{m}{4} \int_{B_{j+1} \cap \{u_n>\omega_j\}} |\nabla u_n|^p \psi + c \int_{B_{j+1} \cap \{u_n>\omega_j\}} k^{\frac{p}{p-r}} u_n^{\frac{p}{p-r}} \psi \\
&\quad + \frac{m}{4} \left( \int_{B_t \cap \{u_n>\omega_j\}} |\nabla u_n|^p + |B_{j+1}| \right) + c \int_{B_{j+1}} u_n^p |\nabla \psi|^p \\
&\leq \omega_j^{1-\gamma} \|h\|_{L^1(\R^N)} + \frac{m}{4} \int_{B_{j+1} \cap \{u_n>\omega_j\}} |\nabla u_n|^p \psi + c \|k\|_{L^\theta(\R^N)}\|\nabla \overline{u}\|_{L^p(B_{j+1})} \\
&\quad + \frac{m}{4} \int_{B_t \cap \{u_n>\omega_j\}} |\nabla u_n|^p + c \left( \frac{\|\overline{u}\|_{L^p(B_{j+1})}^p}{(t-s)^p} + 1 \right),
\end{split}
\end{equation*}
\end{linenomath}
for a suitable $ c = c(j,N,p,r,\theta,m) > 0 $ changing its value at each passage. Absorbing on the left-hand side the integral depending on $ \psi $, besides recalling \eqref{psiprops}, we get
\begin{linenomath}
\begin{equation*}
\begin{split}
&\int_{B_s \cap \{u_n>\omega_j\}} |\nabla u_n|^p \leq \int_{B_{j+1} \cap \{u_n>\omega_j\}} |\nabla u_n|^p \psi \\
&\leq \frac{1}{3} \int_{B_t \cap \{u_n>\omega_j\}} |\nabla u_n|^p + \frac{A}{(t-s)^p} + B,
\end{split}
\end{equation*}
\end{linenomath}
for suitable positive constants $ A $ (depending on $ j,N,p,r,\theta,m,\|\overline{u}\|_{L^p(B_{j+1})} $) and $ B $ (depending on $ j,N,p,r,\theta,m,\gamma,\|h\|_1, \|k\|_\theta,\|\nabla \overline{u}\|_{L^p(B_{j+1})} $). Hence \cite[Lemma 3.1, p.161]{G} can be applied to the function $ \Psi:[j,j+1] \to [0,+\infty) $ defined as
\begin{linenomath}
\begin{equation*}
\Psi(l) := \int_{B_l \cap \{u_n>\omega_j\}} |\nabla u_n|^p,
\end{equation*}
\end{linenomath}
producing
\begin{linenomath}
\begin{equation*}
\int_{B_j \cap \{u_n>\omega_j\}} |\nabla u_n|^p \leq c_p(A+B).
\end{equation*}
\end{linenomath}
Taking into account that $ B_j \subseteq \{u_n>\omega_j\} $, which is a consequence of \eqref{comparison}, we get
\begin{equation}
\label{kinetic}
\|\nabla u_n\|_{L^p(B_j)}^p \leq c_p(A+B).
\end{equation}
Adding \eqref{potential} and \eqref{kinetic} term by term produces \eqref{mainenergyest}.
\end{proof}

\begin{proof}[Proof of Theorem \ref{mainthm}]
Lemma \ref{energyest} ensures that, for any $ j \in \N $, the sequence $ \{u_n\}_{n>j} $ is bounded in $ W^{1,p}(B_j) $. Hence, a diagonal argument and the Rellich-Kondrachov theorem yield, up to subsequences,
\begin{equation}
\label{locconv}
u_n \rightharpoonup v^{(j)} \quad \mbox{in} \;\; W^{1,p}(B_j) \quad \mbox{and} \quad u_n \to v^{(j)} \quad \mbox{a.e.\ in} \;\; B_j
\end{equation}
for some $ v^{(j)} \in W^{1,p}(B_j) $, as $ n \to \infty $. We define
\begin{equation}
\label{solution}
u(x) = \left\{
\begin{array}{ll}
v^{(1)}(x) \quad &\mbox{in} \;\; B_1, \\
v^{(j)}(x) \quad &\mbox{in} \;\; B_j \setminus B_{j-1}, \;\; \forall j > 1.
\end{array}
\right.
\end{equation}
Uniqueness of limit and \eqref{locconv} yield $ v^{(i)} = v^{(j)} $ a.e.\ in $ B_i \cap B_j $ for all $ i,j \in \N $, so $ u = v^{(j)} $ a.e.\ in $ B_j $ for all $ j \in \N $. By construction we have $ u \in W^{1,p}_{\rm loc}(\R^N) $. It remains to prove that $ u $ is a generalized solution to \eqref{prob} (see Definition \ref{gensol}). \\
To this end, take any compact $ K \subseteq \R^N $ and fix $ j \in \N $ such that $ K \Subset B_j $. Passing to the limit in \eqref{comparison}, besides \eqref{locconv}--\eqref{solution}, gives $ u = v^{(j)} \geq \omega_j > 0 $ a.e.\ in $ B_j $, which ensures (i) of Definition \ref{gensol} by arbitrariness of $ K $. \\
Now take any $ \phi \in C^\infty_c(\R^N) $ and fix $ j \in \N $ such that $ \supp \phi \Subset B_j $. According to \eqref{ballprob}, each $ u_n $ with $ n > j $ satisfies
\begin{linenomath}
\begin{equation*}
-\Div a(\nabla u_n) = f(x,u_n+\epsilon_n) + g(x,\nabla u_n) \quad \mbox{in} \;\; \mathscr{D}'(B_j).
\end{equation*}
\end{linenomath}
Let $ s \in (1,p^*) $ as in \eqref{s}. By \eqref{hypf}--\eqref{hypg}, \eqref{comparison}, and H\"older's inequality we get
\begin{equation}
\label{brezisest}
\begin{split}
&\|f(x,u_n+\epsilon_n) + g(x,\nabla u_n)\|_{L^{s'}(B_j)}^{s'} \\
&\leq \int_{B_j} h^{s'}(u_n+\epsilon_n)^{-\gamma s'} + \int_{B_j} k^{s'}|\nabla u_n|^{rs'} \\
&\leq c(\omega_j^{-\gamma s'} \|h\|_{L^{\eta}(\R^N)}^{s'} + \|k\|_{L^\theta(\R^N)}^{s'} \|\nabla u_n\|_{L^p(B_j)}^{rs'})
\end{split}
\end{equation}
for some $ c = c(j,p,r,s,\eta,\theta) > 0 $. Since, due to \eqref{mainenergyest}, the right-hand side of \eqref{brezisest} is bounded, we infer that there exists $ \psi \in L^{s'}(B_j) $ such that
\begin{linenomath}
\begin{equation*}
f(x,u_n+\epsilon_n) + g(x,\nabla u_n) \rightharpoonup \psi \quad \mbox{in} \;\; L^{s'}(B_j).
\end{equation*}
\end{linenomath}
Lemma \ref{gradconvlocal} and \eqref{mainenergyest}, besides \eqref{locconv}--\eqref{solution}, ensure
\begin{equation}
\label{strongconv}
u_n \to u \quad \mbox{in} \;\; W^{1,p}(B_j)
\end{equation}
and
\begin{equation}
\label{finalprob}
-\Div a(\nabla u) = \psi \quad \mbox{in} \;\; \mathscr{D}'(B_j).
\end{equation}
An application of Proposition \ref{Brezis}, together with \eqref{brezisest}--\eqref{strongconv}, reveals that $ \psi = f(\cdot,u) + g(\cdot,\nabla u) $. Hence, testing \eqref{finalprob} with $ \phi $ yields
\begin{linenomath}
\begin{equation*}
\int_{\R^N} a(\nabla u) \cdot \nabla \phi = \int_{\R^N} [f(x,u) + g(x,\nabla u)] \phi.
\end{equation*}
\end{linenomath}
Arbitrariness of $ \phi $ proves (ii) of Definition \ref{gensol}, concluding the proof.
\end{proof}

\begin{rmk}
\label{variational}
If we consider $ \gamma \in (0,1) $ instead of $ \gamma \geq 1 $ in \eqref{hypf}, then we can do the same arguments: indeed, sub- and super-solutions do not depend on the explicit value of $ \gamma $, but their properties rely only on the fact that $ \gamma > 0 $, so that $ s \mapsto s^{-\gamma} $ is a decreasing function. We exploit $ \gamma \geq 1 $ only in the proof of Lemma \ref{energyest}, because we use
\begin{linenomath}
\begin{equation*}
\int_{B_{j+1} \cap \{u_n>\omega_j\}} h u_n^{1-\gamma} \psi \leq \omega_j^{1-\gamma} \|h\|_{L^1(\R^N)} \quad \mbox{since} \;\; \gamma \geq 1.
\end{equation*}
\end{linenomath}
But this estimate can be substituted, in the variational case, by
\begin{linenomath}
\begin{equation*}
\begin{split}
\int_{B_{j+1} \cap \{u_n>\omega_j\}} h u_n^{1-\gamma} \psi &\leq \int_{B_{j+1}} h\overline{u}^{1-\gamma} \leq \int_{B_{j+1}} h(\overline{u}+1) \\
&\leq \|h\|_{L^{(p^*)'}(B_{j+1})} \|\overline{u}\|_{L^{p^*}(B_{j+1})} + \|h\|_{L^1(\R^N)} \quad \quad \mbox{provided} \;\; \gamma \in (0,1), \\
&\leq c(\|h\|_{L^\eta(\R^N)} \|\nabla \overline{u}\|_{L^p(B_{j+1})} + \|h\|_{L^1(\R^N)})
\end{split}
\end{equation*}
\end{linenomath}
valid for a suitable $ c = c(j,N,p,\eta) > 0 $. Anyway, we deem that a simpler proof of Theorem \ref{mainthm} is available in the case $ \gamma \in (0,1) $, and it could be obtained by looking for solutions with finite energy, i.e., $ u \in \mathcal{D}^{1,p}_0(\R^N) $.
\end{rmk}

\appendix
\renewcommand{\thesection}{\Roman{section}}
\section{The differential operator}

Take any matrix $ a(\xi) = a_0(|\xi|) \xi $ (namely, having Uhlenbeck structure), where $ a_0: (0,+\infty) \to (0,+\infty) $ is a $ C^1 $ function. In this appendix we prove that hypotheses
\begin{itemize}
\item[$ {\rm (A_1)} $] \makebox[\textwidth][c]{$ \displaystyle{-1 < i_a := \inf_{t > 0} \frac{ta_0'(t)}{a_0(t)} \leq \sup_{t > 0} \frac{ta_0'(t)}{a_0(t)} =: s_a < +\infty,} $}
\item[$ {\rm (A_2)} $] \makebox[\textwidth][c]{$ mt^{p-1} \leq ta_0(t) \leq M(t^{p-1}+1) \quad \forall t \in (0,+\infty), $}
\end{itemize}
are equivalent to
\begin{itemize}
\item[$ {\rm (B_1)} $] \makebox[\textwidth][c]{$ (\nabla a(\xi) \mu) \cdot \mu \geq \frac{\omega(|\xi|)}{|\xi|} |\mu|^2 \quad \forall \xi,\mu \in \R^N, \, \xi \neq 0, $}
\item[$ {\rm (B_2)} $] \makebox[\textwidth][c]{$ |\nabla a(\xi)| \leq \Lambda \frac{\omega(|\xi|)}{|\xi|} \quad \forall \xi \in \R^N \setminus \{0\}, $}
\item[$ {\rm (B_3)} $] \makebox[\textwidth][c]{$ t \mapsto ta_0(t) $ is strictly increasing in $ (0,+\infty) $ and}
\begin{equation}
\label{redundant}
\qquad \qquad \qquad \lim_{s \to 0^+} ta_0(t) = 0, \quad \lim_{s \to 0^+} \frac{ta_0'(t)}{a_0(t)} > -1,
\end{equation}
\end{itemize}
where $ 1<p<+\infty $, $ m,M,\Lambda > 0 $, and $ \omega:(0,+\infty) \to (0,+\infty) $ is a $ C^1 $ function satisfying
\begin{equation}
\label{omega1}
C_1 \leq \frac{t\omega'(t)}{\omega(t)} \leq C_2 \quad \forall t \in (0,+\infty),
\end{equation}
and
\begin{equation}
\label{omega2}
C_3 t^{p-1} \leq \omega(t) \leq C_4(t^{p-1}+1) \quad \forall t \in (0,+\infty),
\end{equation}
for some $ C_1,C_2,C_3,C_4>0 $. In addition, we show that $ {\rm (B_3)} $ is redundant for the second set of hypotheses: more precisely, $ {\rm (B_1)} $--$ {\rm (B_2)} $ and \eqref{omega1} together imply $ {\rm (B_3)} $. We premit three considerations, concerning respectively the eigenvalues of $ \nabla a(\xi) $, an equivalent form of $ {\rm (B_1)} $--$ {\rm (B_2)} $, and the growth condition induced by \eqref{omega1}.

First of all, we compute
\begin{linenomath}
\begin{equation*}
\nabla a(\xi) = |\xi| a_0'(|\xi|) \frac{\xi}{|\xi|} \otimes \frac{\xi}{|\xi|} + a_0(|\xi|) I_N,
\end{equation*}
\end{linenomath}
where $ I_N $ stands for the $ N \times N $ identity matrix, while the symbol $ \otimes $ denotes the tensor product in $ \R^N $. In particular, $ \nabla a(\xi) $ is a symmetric matrix and possesses only the eigenvalues $ \lambda_1(\xi) = |\xi|a_0'(|\xi|) + a_0(|\xi|) $ (whose corresponding eigenspace is generated by $ \xi $) and $ \lambda_2(\xi) = a_0(|\xi|) $ (whose associated eigenspace is $ \xi^\perp $, that is, the orthogonal complement of $ \xi $). In particular, both eigenvalues are positive.

Secondly, we observe that $ {\rm (B_1)} $--$ {\rm (B_2)} $ are equivalent to
\begin{equation}
\label{equivalence}
\frac{\omega(|\xi|)}{|\xi|} \leq \lambda_{\rm min}(\xi) \leq \lambda_{\rm max}(\xi) \leq \Lambda \frac{\omega(|\xi|)}{|\xi|},
\end{equation}
where $ \lambda_{\rm min}(\xi) := \min\{\lambda_1(\xi),\lambda_2(\xi)\} $ and $ \lambda_{\rm max}(\xi) := \max\{\lambda_1(\xi),\lambda_2(\xi)\} $. Indeed, recalling that all norms on $ \R^N $ are equivalent, we can suppose that $ |\nabla a(\xi)| := |\nabla a(\xi)|_2 $; we also have $ |\nabla a(\xi)|_2 = \lambda_{\rm max} (\xi) $, since $ \nabla a(\xi) $ is symmetric. This fact shows that $ {\rm (B_1)} $ is equivalent to the last inequality of \eqref{equivalence}. On the other hand,
\begin{equation*}
\lambda_{\rm \min}(\xi)|\mu|^2 \leq (\nabla a(\xi) \mu) \cdot \mu \quad \mbox{for all} \;\; \mu \in \R^N
\end{equation*}
and
\begin{equation*}
\lambda_{\rm \min}(\xi)|\hat{\mu}|^2 = (\nabla a(\xi) \hat{\mu}) \cdot \hat{\mu} \quad \mbox{for some} \;\; \hat{\mu} \in \R^N
\end{equation*}
furnish the equivalence between $ {\rm (B_2)} $ and the first inequality of \eqref{equivalence}.

Finally, we notice that \eqref{omega1} forces a growth condition on $ \omega $ which differs from \eqref{omega2}: indeed, dividing \eqref{omega1} by $ t $ and integrating in $ [1,t] $ for any $ t \geq 1 $, one gets
\begin{linenomath}
\begin{equation*}
C_1 \log t \leq \log \omega(t) - \log \omega(1) \leq C_2 \log t,
\end{equation*}
\end{linenomath}
whence
\begin{equation}
\label{greaterthanone}
\omega(1)t^{C_1} \leq \omega(t) \leq \omega(1) t^{C_2} \quad \forall t \in [1,+\infty).
\end{equation}
Analogously, integrating in $ [t,1] $ for any $ t \in (0,1) $, one obtains
\begin{equation}
\label{lessthanone}
\omega(1)t^{C_2} \leq \omega(t) \leq \omega(1) t^{C_1} \quad \forall t \in (0,1).
\end{equation}
By \eqref{greaterthanone}--\eqref{lessthanone} we deduce
\begin{equation}
\label{nonstandard}
\omega(1)\min\{t^{C_1},t^{C_2}\} \leq \omega(t) \leq \omega(1)\max\{t^{C_1},t^{C_2}\} \quad \forall t \in (0,+\infty).
\end{equation}
It is worth noticing that, due to \eqref{nonstandard}, hypothesis \eqref{omega1} forces $ p \in [C_1+1,C_2+1] $, where $ p $ stems from \eqref{omega2}. Now we are ready to prove the equivalence between $ {\rm (A_1)} $--$ {\rm (A_2)} $ and $ {\rm (B_1)} $--$ {\rm (B_3)} $.

\noindent
\textit{\underline{Step 1}: $ {\rm (A_1)} $ implies $ {\rm (B_1)} $--$ {\rm (B_2)} $ for a suitable $ \omega $ satisfying \eqref{omega1}.} \\
Choose
\begin{equation}
\label{choice}
\omega(t) := kta_0(t) \quad \forall t \in (0,+\infty),
\end{equation}
with $ k := \min\{1,i_a+1\} $. Hypothesis $ {\rm (A_1)} $ yields, for all $ t > 0 $,
\begin{equation}
\label{elliptic}
\frac{t\omega'(t)}{\omega(t)} = \frac{ta_0'(t)+a_0(t)}{a_0(t)} = \frac{ta_0'(t)}{a_0(t)} + 1 \in [i_a+1,s_a+1],
\end{equation}
so \eqref{omega1} is proved with $ C_1 := i_a+1 $ and $ C_2 := s_a+1 $. Observe that
\begin{linenomath}
\begin{equation*}
\lambda_1(\xi) = a_0(|\xi|) \left( \frac{|\xi|a_0'(|\xi|)}{a_0(|\xi|)}+1 \right) = \frac{\omega(|\xi|)}{k|\xi|} \left( \frac{|\xi|a_0'(|\xi|)}{a_0(|\xi|)}+1 \right)
\end{equation*}
\end{linenomath}
and
\begin{linenomath}
\begin{equation*}
\lambda_2(\xi) = a_0(|\xi|) = \frac{\omega(|\xi|)}{k|\xi|}.
\end{equation*}
\end{linenomath}
Thus, \eqref{elliptic} implies \eqref{equivalence} with $ \Lambda := k^{-1}\max\{1,s_a+1\} $.

\noindent
\textit{\underline{Step 2}: $ {\rm (A_2)} $ implies \eqref{omega2} for the $ \omega $ defined in \eqref{choice}.} \\
It suffices to choose $ C_3 := km $ and $ C_4 := kM $, since we have
\begin{linenomath}
\begin{equation*}
kmt^{p-1} \leq \omega(t) \leq kM(t^{p-1}+1) \quad \forall t \in (0,+\infty).
\end{equation*}
\end{linenomath}

\noindent
\textit{\underline{Step 3}: $ {\rm (B_1)} $--$ {\rm (B_2)} $ imply $ {\rm (A_1)} $.} \\
Fix $ t > 0 $ and pick any $ \xi \in \R^N $ such that $ |\xi| = t $. Notice that
\begin{equation}
\label{eigenratio}
\frac{ta_0'(t)}{a_0(t)} = \frac{\lambda_1(\xi)}{\lambda_2(\xi)} - 1.
\end{equation}
By \eqref{equivalence} we get
\begin{equation}
\label{elliptbound}
\frac{1}{\Lambda} \leq \frac{\lambda_{\rm min}(\xi)}{\lambda_{\rm \max}(\xi)} \leq \frac{\lambda_1(\xi)}{\lambda_2(\xi)} \leq \frac{\lambda_{\rm max}(\xi)}{\lambda_{\rm \min}(\xi)} \leq \Lambda,
\end{equation}
whence
\begin{linenomath}
\begin{equation*}
-1 < \frac{1}{\Lambda}-1 \leq \frac{ta_0'(t)}{a_0(t)} \leq \Lambda-1 < +\infty.
\end{equation*}
\end{linenomath}
Arbitrariness of $ t $ ensures $ {\rm (A_1)} $.

\noindent
\textit{\underline{Step 4}: $ {\rm (B_1)} $--$ {\rm (B_2)} $ and \eqref{omega2} jointly imply $ {\rm (A_2)} $.} \\
Consider $ t,\xi $ as in Step 3. According to \eqref{equivalence} and \eqref{omega2}, besides recalling that $ \lambda_2(\xi) = a_0(t) $, we have
\begin{linenomath}
\begin{equation*}
C_3 t^{p-1} \leq \omega(t) \leq ta_0(t) \leq \Lambda \omega(t) \leq \Lambda C_4(t^{p-1}+1),
\end{equation*}
\end{linenomath}
so $ {\rm (A_2)} $ is satisfied with $ m := C_3 $ and $ M =: \Lambda C_4 $. 

\noindent
\textit{\underline{Step 5}: $ {\rm (B_1)} $--$ {\rm (B_2)} $ and \eqref{omega1} jointly imply $ {\rm (B_3)} $.} \\
Strict monotonicity of $ t \mapsto ta_0(t) $ comes from \eqref{eigenratio}--\eqref{elliptbound}: indeed,
\begin{linenomath}
\begin{equation*}
(ta_0(t))' = ta_0'(t)+a_0(t) = a_0(t) \left( \frac{ta_0'(t)}{a_0(t)}+1 \right) \geq \frac{1}{\Lambda} a_0(t) > 0 \quad \forall t \in (0,+\infty).
\end{equation*}
\end{linenomath}
The same argument reveals that
\begin{linenomath}
\begin{equation*}
\lim_{t \to 0^+} \frac{ta_0'(t)}{a_0(t)} \geq \frac{1}{\Lambda}-1 > -1,
\end{equation*}
\end{linenomath}
which implies the second part of \eqref{redundant}. By \eqref{equivalence} and \eqref{lessthanone} one has
\begin{linenomath}
\begin{equation*}
0 \leq ta_0(t) \leq \Lambda \omega(t) \leq \Lambda \omega(1)t^{C_1} \quad \forall t \in (0,1),
\end{equation*}
\end{linenomath}
so letting $ t \to 0^+ $ entails $ ta_0(t) \to 0 $, concluding the proof of \eqref{redundant}.

\begin{rmk}
\label{finalrmk}
Hypothesis \eqref{omega1} is redundant in this sense: $ {\rm (B_1)} $--$ {\rm (B_2)} $, written for $ \omega $ not necessarily satisfying \eqref{omega1}, imply $ {\rm (A_1)} $ (see Step 3 above), and $ {\rm (A_1)} $ in turn implies $ {\rm (B_1)} $--$ {\rm (B_2)} $ for a suitable $ \tilde{\omega} $ instead of $ \omega $ (cf.\ \eqref{choice}); moreover, $ \tilde{\omega} $ fulfills \eqref{omega1}, from Step 1. In other words, under conditions $ {\rm (B_1)} $--$ {\rm (B_2)} $, it is possible to replace $ \omega $ with another $ \tilde{\omega} $ such that $ \tilde{\omega} $ obeys \eqref{omega1}. If, in addition, \eqref{omega2} is satisfied by $ \omega $, then also $ \tilde{\omega} $ satisfies \eqref{omega2} (vide Steps 4 and 2).
\end{rmk}

\section*{Acknowledgments}

\noindent
The authors thank Professor S.A. Marano for suggesting this research topic. \\
The second author is supported by the following research projects: 1) PRIN 2017 `Nonlinear Differential Problems via Variational, Topological and Set-valued Methods' (Grant No. 2017AYM8XW) of MIUR; 2) PRA 2020-2022 Linea 3 `MO.S.A.I.C.' of the University of Catania.

\begin{small}

\end{small}

\end{document}